\newtheorem{theorem}{Theorem}[section]
\newtheorem{corollary}[theorem]{Corollary}
\newtheorem{lemma}[theorem]{Lemma}
\newtheorem{proposition}[theorem]{Proposition}
\newtheorem{conjecture}[theorem]{Conjecture}
\newtheorem{problem}[theorem]{Problem}
\newtheorem{claim}[theorem]{Claim}
\theoremstyle{definition}
\newtheorem{definition}[theorem]{Definition}
\newtheorem{example}[theorem]{Example}
\newtheorem{question}[theorem]{Question}
\newtheorem{observation}[theorem]{Observation}
\numberwithin{equation}{section}
\def\cF{{\mathcal F}}
\def\NN{{\mathbb N}}
\def\esub{\subseteq}
\def\VEC#1#2#3{#1_{#2},\ldots,#1_{#3}}
\def\Gb{\overline{G}}
\def\Hb{\overline{H}}
\def\Kb{\overline{K}}
\def\cD{\mathcal{D}}
\def\FL#1{\left\lfloor #1\right\rfloor}
\def\CL#1{\left\lceil #1\right\rceil}
\def\FR#1#2{\frac{#1}{#2}}
\def\CH#1#2{\binom{#1}{#2}}
\def\SE#1#2#3{\sum_{{#1}={#2}}^{#3}}
\def\PE#1#2#3{\prod_{{#1}={#2}}^{#3}}
\def\C#1{\left| #1\right|}
\def\er{{\rm e}}
\def\bG{{\bf G}}
\def\bT{{\bf T}}
\begin{document}

\title{On reconstruction of graphs from the multiset of subgraphs obtained by
deleting $\ell$ vertices}
\author{
Alexandr V. Kostochka\thanks{University of Illinois at Urbana--Champaign,
Urbana IL 61801, and Sobolev Institute of Mathematics, Novosibirsk 630090,
Russia: \texttt{kostochk@math.uiuc.edu}.  Research supported in part by NSF
grants DMS-1600592 and grants 18-01-00353A and 19-01-00682  of the Russian
Foundation for Basic Research.}\,,
Douglas B. West\thanks{
Departments of Mathematics, Zhejiang Normal University, Jinhua 321004, China,
and University of Illinois, Urbana IL 61801, U.S.A.; west@math.uiuc.edu.
Research supported by Recruitment Program of Foreign Experts, 1000 Talent Plan,
State Administration of Foreign Experts Affairs, China.} 
}

\date{
{\it Dedicated to the memory of Vladimir Levenshtein}\\\smallskip
}
\maketitle

\vspace{-2pc}

\begin{abstract}
The Reconstruction Conjecture of Ulam asserts that, for $n\geq 3$, every
$n$-vertex graph is determined by the multiset of its induced subgraphs with
$n-1$ vertices.  The conjecture is known to hold for various special classes of
graphs but remains wide open.  We survey results on the more general conjecture
by Kelly from 1957 that for every positive integer $\ell$ there exists $M_\ell$
(with $M_1=3$) such that when $n\geq M_\ell$ every $n$-vertex graph is
determined by the multiset of its induced subgraphs with $n-\ell$ vertices.

\smallskip
\noindent
{MSC Codes:} 05C60, 05C07\\
{Key words: graph reconstruction, deck, reconstructibility, connected,
random graph} 
\end{abstract}

\baselineskip16pt

\section{Introduction} \label{sec:intro}
Among the basic problems in combinatorics are so-called {\em reconstruction
problems}, where we try to identify an object knowing only partial information
about it.  A classic example is the famous Graph Reconstruction Conjecture of
Ulam first posed in the thesis of Kelly in 1942:

\begin{conjecture}[The Reconstruction Conjecture; Kelly~\cite{kel1,kel2},
Ulam~\cite{U}]
For $n\geq 3$, every $n$-vertex graph is determined by the multiset of its
$(n-1)$-vertex induced subgraphs.
\end{conjecture}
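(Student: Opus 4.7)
The plan is to attack the Reconstruction Conjecture by first reducing to the connected case and then amassing enough deck-determined invariants to pin $G$ down up to isomorphism. The opening moves are standard. The number of edges satisfies $e(G)=\tfrac{1}{n-2}\sum_v e(G-v)$, since each edge is absent from exactly two cards, and from $e(G)$ together with each $e(G-v)$ we read off the degree of the missing vertex, hence the degree sequence of $G$. A short argument of Kelly shows that connectedness is deck-determined, and Kelly's theorem on disconnected graphs disposes of the disconnected case, so we may assume $G$ is connected.

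The principal engine is Kelly's Lemma: for every graph $H$ with $|V(H)|<n$,
\[
s(H,G) \;=\; \frac{1}{n-|V(H)|}\sum_{v\in V(G)} s(H,G-v),
\]
where $s(H,\cdot)$ counts (induced) copies of $H$. Consequently the count of every proper induced subgraph of $G$ is deck-determined, and from these counts one extracts a long list of secondary invariants: short cycle counts, spanning-subgraph counts in proper cards, local neighborhood profiles, and spectral information obtainable from subgraph traces. For various structured classes---trees, regular graphs, graphs of very high or very low maximum degree, and almost all random graphs---these invariants already suffice. A natural strategy is therefore to carve the class of connected graphs into subclasses and dispatch each one, progressively shrinking the space of unresolved graphs.

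The main obstacle lies precisely in the gap at $|V(H)|=n$: Kelly's Lemma controls every proper subgraph statistic but says nothing directly about $G$ itself. A counterexample would be a pair $G,G'$ on $n$ vertices agreeing on every proper subgraph count yet globally non-isomorphic, and the whole difficulty is to rule out such \emph{phantom twins}. I expect this to be the hardest step, and two routes seem reasonable. The structural route is to exhibit a new deck-computable invariant---perhaps of an algebraic or spectral flavor---strong enough to separate any two non-isomorphic $n$-vertex graphs. The probabilistic route, in the spirit of the Bollob\'as-type theorem that almost every graph is reconstructible, is to show that the set of graphs consistent with a prescribed deck collapses to a single isomorphism class once sufficiently many local features have been pinned down. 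No general method for effecting either step is currently known, which is precisely why the conjecture has remained open for nearly eight decades; any successful proof must introduce genuinely new machinery at this juncture.
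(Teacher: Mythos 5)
This statement is the Reconstruction Conjecture itself; the paper does not prove it --- no one has --- and so there is no ``paper's own proof'' to compare against. Your proposal is therefore best judged as a research plan, and as such it contains a genuine and explicitly acknowledged gap at exactly the point where the conjecture lives. The preliminary steps you describe are all correct and standard: the edge count $e(G)=\frac{1}{n-2}\sum_v e(G-v)$, the degree sequence, Kelly's Lemma giving $s(H,G)$ for every $H$ with $|V(H)|<n$, the $1$-reconstructibility of disconnected graphs, and the recognizability of connectedness (an $n$-vertex connected graph has at least two connected cards, a disconnected one at most one). All of this is in the paper's Sections~\ref{sec:intro}--\ref{sec:connect} and in the cited literature.

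The gap is the step you label ``phantom twins'': Kelly's Lemma determines every proper induced subgraph count but says nothing about $n$-vertex structure, and no deck-computable invariant is known that separates arbitrary non-isomorphic $n$-vertex graphs. Neither of your two proposed routes is carried out, nor is there any indication of how it could be; the spectral route in particular is known to be insufficient on its own, since cospectral non-isomorphic graphs exist and the characteristic polynomial, while deck-determined (Tutte), does not determine the graph. The probabilistic route yields only ``almost all graphs'' statements (Theorem~\ref{bollob} and Lemma~\ref{nonisom} in the paper), which by design cannot close the conjecture for every graph. So what you have written is an accurate survey of why the easy parts are easy, followed by a correct identification of the hard part, but no proof. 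You should present this as a discussion of partial progress, not as a proof proposal for the conjecture.
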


The multiset of $(n-1)$-vertex induced subgraphs is called the {\it deck} of
the graph, with each such subgraph being a {\it card} in the deck.  The vertices
in the cards are unlabeled, meaning that only the isomorphism class of each
card is given.  The restriction $n\geq 3$ is needed because the two graphs with
two vertices have the same deck.  A graph is {\it reconstructible} if it is
determined by its deck, meaning that no other graph has the same deck.  In this
terminology, the Reconstruction Conjecture asserts that every graph with at
least three vertices is reconstructible.

The conjecture has attracted a lot of attention.  Graphs in many families are
known to be reconstructible; these include disconnected graphs, trees, regular
graphs, and perfect graphs.  Surveys on graph reconstruction
include~\cite{Bondy91,BH77,Lauri97,LS,Maccari02}.

Various parameters have been introduced to measure the difficulty of 
reconstructing a graph.  Harary and Plantholt~\cite{HP} defined the
{\it reconstruction number} of a graph to be the minimum number of cards from
its deck that suffice to determine it, meaning that no other graph has this
multiset of cards in its deck (surveyed in~\cite{AFLM}).  All trees with at
least five vertices have reconstruction number $3$ (Myrvold~\cite{Myrvold90}),
and almost all graphs have reconstruction number $3$
(Bollob\'as~\cite{Bollobas}).  No graphs have reconstruction number $2$, since
two cards cannot determine whether the two vertices deleted to form the cards
are adjacent.

Let $K_{\VEC t1r}$ denote the complete $r$-partite graph with part-sizes
$\VEC t1r$.  Since $K_{t,t}$ and $K_{t+1,t-1}$ have $t+1$ common cards, the
reconstruction number of an $n$-vertex graph can be as large as $\FR n2+2$
(Myrvold~\cite{Myrvold89}).  (Here $K_{s,t}$ is the complete bipartite graph
with parts of sizes $s$ and $t$.)  Harary and Plantholt~\cite{HP} strengthened
the Reconstruction Conjecture by conjecturing that when $n\ge3$ every
$n$-vertex graph has reconstruction number at most $\FR n2+2$, with equality
only for $K_{n/2,n/2}$ and $2K_{n/2}$ in general, plus $P_4$.
Kocay and Kreher~\cite{KK} constructed $n$-graphs with reconstruction number
$\FR n2+1$ when $n=4q-4$ and $q$ is a prime power congruent to $1$ modulo $4$.

We can also study the reconstruction number of graph properties.
Myrvold~\cite{Mthe} and Bowler et al.~\cite{BBFM} showed that any $\FL{n/2}+2$
cards determine whether an $n$-vertex graph is connected.  Much effort went
into reducing the number of cards needed to determine $m$, the number of edges.
Myrvold~\cite{Myrvold92} showed that $m$ and in fact also the degree list are
determined by any $n-1$ cards when $n\ge7$ (this is sharp).  Monikandan and
Balakumar~\cite{MB} showed that $m$ is determined within $1$ by any $n-2$ cards
(strengthening~\cite{RM}).  Woodall~\cite{Woodall} proved for
$n\ge\max\{34,3p^2+1\}$ that $m$ is determined within $p-2$ by $n-p$ cards.
Brown and Fenner~\cite{BF} proved that $m$ is determined by any $n-2$ cards
when $n\ge29$, and they presented two $8$-vertex graphs with six common cards
whose numbers of edges differ by $1$.  Groenland, Guggiari, and Scott~\cite{GGS}
proved that in fact $m$ is determined by any $n-\sqrt n\,/20$ cards when
$n$ is sufficiently large.

These results concern not so much the reconstruction number as the
{\it adversary reconstruction number}~\cite{Mthe} or {\it universal
reconstruction number}~\cite{BBF}, which is the minimum $t$ such that any $t$
cards determine the graph (or a particular property).  Bowler, Brown, and
Fenner~\cite{BBF} presented infinite families of pairs of graphs sharing
$2\FL{(n-1)/3}$ common cards, improving Myrvold~\cite{Mthe,Myrvold90}.
They conjectured that when $n$ is sufficiently large, every $n$-vertex graph
is determined by any $2\FL{(n-1)/3}+1$ of its cards ($n>12$ is needed).

Kelly~\cite{kel2} took another direction, considering cards obtained by
deleting more vertices.

\begin{definition}
A {\it $k$-card} of a graph is an induced subgraph having $k$ vertices.  The
\emph{$k$-deck} of $G$, denoted $\cD_k(G)$, is the multiset of all $k$-cards
(given as isomorphism classes).  A graph $G$ is {\it determined by its $k$-deck}
if $\cD_k(H)=\cD_k(G)$ implies $H\cong G$.  A graph $G$ (or a graph invariant)
is {\it $\ell$-reconstructible} if it is determined by $\cD_{|V(G)|-\ell}(G)$
(agreeing on all graphs having that deck).  The
{\it maximum reconstructibility} of a graph $G$
is the maximum $\ell$ such that $G$ is $\ell$-reconstructible.
\end{definition}

Study of reconstruction from the $k$-deck was begun by Manvel~\cite{Manvel}.
There followed several papers by N\'ydl, including surveys (\cite{N84,N01}) of
the early results.  N\'ydl studied the least $k$ (as a function of $n$) such
that every $n$-vertex graph (or every $n$-vertex graph in a restricted family
such as trees) is determined by its $k$-deck.

For an $n$-vertex graph, ``determined by its $k$-deck'' and
``$\ell$-reconstructible'' have the same meaning when $k+\ell=n$.
The motivation for defining the maximum reconstructibility as a measure
of the ease of reconstructing a graph is the following elementary observation.

\begin{observation}\label{k-1}
For any graph $G$, the $k$-deck $\cD_k(G)$ determines the $(k-1)$-deck
$\cD_{k-1}(G)$.
\end{observation}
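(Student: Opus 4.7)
The plan is to recover the multiplicity of each isomorphism class $H$ on $k-1$ vertices in $\cD_{k-1}(G)$ using a double-counting argument applied to pairs of nested vertex subsets.

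First I would note that $n=|V(G)|$ is already determined by $\cD_k(G)$, since the number of cards is $\binom{n}{k}$ and $k$ is visible from the cards themselves. Next, fix any graph $H$ with $k-1$ vertices, and let $c_H$ denote the multiplicity of $H$ in $\cD_{k-1}(G)$, that is, the number of $(k-1)$-element sets $S\esub V(G)$ with $G[S]\cong H$. For each $k$-card $F$ in $\cD_k(G)$, let $a_H(F)$ be the number of $(k-1)$-element subsets $U\esub V(F)$ with $F[U]\cong H$; this quantity depends only on the isomorphism class of $F$, hence is readable from $\cD_k(G)$.

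The key step is to count pairs $(S,T)$ with $S\esub T\esub V(G)$, $|S|=k-1$, $|T|=k$, and $G[S]\cong H$, in two ways. Counting first by $S$: each admissible $S$ extends to $T$ in exactly $n-k+1$ ways, giving $(n-k+1)c_H$. Counting first by $T$: each $k$-subset $T$ contributes $a_H(G[T])$, and summing over $T$ is the same as summing over the cards in $\cD_k(G)$. Equating the two counts yields
\[
(n-k+1)\,c_H \;=\; \SM{F}{\cD_k(G)} a_H(F).
\]
The right-hand side is determined by $\cD_k(G)$, and $n-k+1\ge 1$ whenever the $(k-1)$-deck is defined, so $c_H$ is determined. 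Applying this for every isomorphism type $H$ on $k-1$ vertices recovers $\cD_{k-1}(G)$.

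There is no real obstacle here beyond bookkeeping; the only point worth checking is that $n$ itself is recoverable from $\cD_k(G)$ (so that the coefficient $n-k+1$ is known), which follows from the cardinality of the deck as noted above.
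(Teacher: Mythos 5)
Your proposal is correct and is essentially the paper's argument: the paper's one-line proof is precisely the observation that each $(k-1)$-card appears in $\C{V(G)}-k+1$ cards of the $k$-deck, which is your double count of nested pairs $(S,T)$ organized by isomorphism type. Your added remark that $n$ is recoverable from $|\cD_k(G)|=\CH{n}{k}$ is a detail the paper leaves implicit but is worth having.
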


The proof is that each card in $\cD_{k-1}(G)$ appears in $\C{V(G)}-k+1$
cards in $\cD_k$.  By Observation~\ref{k-1}, information that is determined by
the $k$-deck is also determined by the $j$-deck when $j>k$.  This leads to a
stronger version of the Reconstruction Conjecture.

\begin{conjecture}[Kelly~\cite{kel2}]
For $\ell\in\NN$, there is an integer $M_\ell$ such that every graph with at
least $M_\ell$ vertices is $\ell$-reconstructible.
\end{conjecture}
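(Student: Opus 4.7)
The plan is to attack the conjecture in stages, beginning with the acknowledgment that the $\ell=1$ case is Ulam's Reconstruction Conjecture and hence remains wide open, so any complete proof of the general statement would subsume it. Accordingly I would not seek a single unified argument but would proceed along three complementary fronts: extracting invariants from $\cD_{n-\ell}(G)$ via counting, establishing the conjecture for structured classes of graphs, and handling the generic case probabilistically.

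The main counting tool is a Kelly-type lemma: for every $k\le n-\ell$, the deck $\cD_{n-\ell}(G)$ determines the number of (not necessarily induced) copies in $G$ of each $k$-vertex graph $F$, via a standard double-counting identity that sums counts across cards. From this one extracts the number of edges, the degree sequence, subgraph counts such as triangles and paths, and in principle the profile of all subgraphs on up to $n-\ell$ vertices. The goal at this stage is to show that, once $n$ is sufficiently large in terms of $\ell$, the invariants forced by $\cD_{n-\ell}(G)$ already determine $G$ on broad classes such as disconnected graphs, trees, regular graphs, and graphs of bounded degree. For trees I would use a Kelly-style induction, exploiting Observation~\ref{k-1} to pass from $\cD_{n-\ell}$ to smaller decks whenever intermediate information is useful. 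For the generic case I would adapt the second-moment calculation of Bollob\'as~\cite{Bollobas} to show that almost all $n$-vertex graphs are $\ell$-reconstructible once $n$ is large compared to $\ell$, then attempt to sweep up the thin family of exceptional graphs by direct structural arguments.

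The main obstacle is the same one that blocks Ulam's conjecture itself: we have no effective way to compare two non-isomorphic graphs purely through their decks in the worst case. The pairs $K_{t,t}$ versus $K_{t+1,t-1}$ (and their known generalizations) show that decks can agree on remarkably many cards, and allowing $\ell$ vertices to be deleted only enlarges the pool of potentially ambiguous pairs. Any proof must therefore either yield an explicit reconstruction procedure---unlikely in light of the $\ell=1$ case---or argue non-constructively that the map $G\mapsto \cD_{n-\ell}(G)$ is eventually injective, presumably via an extremal bound on the fiber sizes above a given deck that forces them to $1$ once $n\ge M_\ell$. I expect the principal bottleneck to be quantifying how the permitted flexibility $\ell$ trades against the required threshold $M_\ell$; an effective dependence, rather than mere existence of $M_\ell$, is likely out of reach by current methods.
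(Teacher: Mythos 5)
This statement is an open conjecture, and the paper offers no proof of it --- the entire survey is an account of partial progress toward it. Your proposal is likewise not a proof but a research programme, and the programme as described cannot close the conjecture. The decisive gap is that your case decomposition does not cover all graphs: disconnected graphs, trees, regular graphs, bounded-degree graphs, and ``almost all'' graphs together still omit, for every $n$, a set of graphs of positive size about which nothing structural is known, and ``sweep up the thin family of exceptional graphs by direct structural arguments'' is a placeholder, not an argument. Worse, several of the cases you treat as tractable stepping stones are themselves open for general $\ell$: $\ell$-reconstructibility of trees is unresolved (the Giles preprint claiming it was never published), $\ell$-reconstructibility of disconnected graphs with a large component would already imply the $\ell=1$ Reconstruction Conjecture (Proposition~6.1 of the paper), and even recognizing connectedness from the $(n-\ell)$-deck is only known for $n>2\ell^{(\ell+1)^2}$. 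So the reduction bottoms out in statements at least as hard as the conjecture itself.

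A second, more technical point: your Kelly-type lemma determines the counts of all subgraphs on at most $n-\ell$ vertices, but that information is exactly equivalent to the $(n-\ell)$-deck itself (each card is an induced subgraph on $n-\ell$ vertices, and the counts of smaller subgraphs follow by Observation~\ref{k-1}). It therefore extracts invariants such as the degree list only under additional hypotheses --- the paper needs $n\ge g(\ell)$ with $g(\ell)\approx\er\ell$ even for the degree list (Theorem~\ref{Taylor}) --- and it cannot by itself distinguish the known pairs of nonisomorphic graphs sharing a $k$-deck, such as $P_{2\ell}$ versus $C_{\ell+1}+P_{\ell-1}$ or N\'ydl's superlinear examples showing $M_\ell/\ell\to\infty$. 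Your own final paragraph concedes that the injectivity of $G\mapsto\cD_{n-\ell}(G)$ for large $n$ is precisely what must be shown and that no method is available; that concession is accurate, and it means the proposal identifies the problem rather than solving it.
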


The original Reconstruction Conjecture is the claim $M_1=3$.
Having checked by computer that every graph with at least six and at most nine
vertices is $2$-reconstructible (there are $5$-vertex graphs that are not),
McMullen and Radziszowski~\cite{MR} asked whether $M_2=6$.  With computations
up to nine vertices, Rivshin and Radziszowski~\cite{RR} conjectured
$M_\ell\le3\ell$.  N\'ydl~\cite{N92} disproved this, showing that $M_\ell$
must grow at least superlinearly in $\ell$; that is, $M_\ell/\ell\to\infty$.
He proved that for any $n_0\in\NN$ and $0<q<1$, there are nonisomorphic
$n$-vertex graphs for some $n$ larger than $n_0$ having the same
$\FL{qn}$-deck.

For more detailed understanding, it is natural to study the threshold number
of vertices for $\ell$-reconstructibility for graphs in a given family, which
may be smaller than for the family of all graphs.  We may write this as
$M_\ell(\bG)$ for a family $\bG$.  For example, Spinoza and West~\cite{SW} (see
Section~\ref{sec:maxdeg2}) proved that the path $P_{2\ell}$ and the graph
$C_{\ell+1}+P_{\ell-1}$ have the same $\ell$-deck (here ``$+$'' denotes
disjoint union of graphs, while $C_n$, $P_n$, $K_n$ respectively denote the
cycle, path, and complete graph on $n$ vertices).
Thus $M_\ell(\bG_2)\ge 2\ell+1$ for $\ell\ge3$, where $\bG_d$ is the family
of graphs with maximum degree at most $d$.  In fact, they proved
$M_\ell(\bG_2)=2\ell+1$.

For the family $\bT$ of trees, the same lower bound is known and is sharp
for $\ell=2$ (Giles~\cite{Giles}), but equality is open for $\ell\ge3$.  Let
$S_{a,b,c}$ be the subdivision of $K_{1,3}$ consisting of paths of lengths $a$,
$b$, and $c$ with one common endpoint (in general, a tree consisting of paths
with one common endpoint is called a ``spider'').
N\'ydl~\cite{N81} observed that $S_{k-1,k-1,1}$ and $S_{k,k-2,1}$ are spiders
with $2k$ vertices having the same $k$-deck.  We will give a short proof of
this using the results on common $k$-decks for graphs in $\bG_2$ that are
discussed in Section~\ref{sec:maxdeg2}.  The result implies
$M_\ell(\bT)\ge2\ell+1$, and N\'ydl~\cite{N81} conjectured that equality holds.
One can generalize this question to the family $\bT_r$ of connected graphs $G$
such that $\C{E(G)}-\C{V(G)}+1\le r$; that is, $\bT=\bT_0$.  N\'ydl~\cite{N81}
constructed two graphs with $3k+9$ vertices and $3k+12$ edges having the same
$2k$-deck, thus yielding $M_\ell(\bT_4)\ge 3\ell-18$.

As with ordinary reconstruction, proving that the graphs in a family $\bG$ are
$\ell$-reconstruct\-ible may involve two steps.  One is to show that the
family is {\it $\ell$-recognizable}, meaning that whether $G\in\bG$ holds is
determined by $\cD_{|V(G)|-\ell}$.  That is, every graph having the same
deck as a graph in $\bG$ is also in $\bG$.  For example, Manvel~\cite{Manvel}
showed that when $|V(G)|=n\ge6$, the $(n-2)$-deck determines whether $G$ is
connected, acyclic, unicyclic, regular, or bipartite.  That is, these
properties are $2$-reconstructible when $n\ge6$.

The pair $\{P_{2\ell},C_{\ell+1}+P_{\ell-1}\}$ mentioned earlier shows for
$n$-vertex graphs that guaranteeing $\ell$-reconstructibility of the property
of connectedness (or $\ell$-recognition of the family of connected graphs)
requires $n\ge2\ell+1$.  The correct general threshold remains open.

On the other hand, the fraction of $n$-vertex graphs whose maximum
reconstructibility is at least $(1-o(1))n/2$ tends to $1$ (see
Section~\ref{sec:almostall}).  This was observed originally by
M\"uller~\cite{Muller}.  In particular, there is surprisingly small difference
between the maximum reconstructibility of almost all graphs and the failure of
reconstructibility of the property of connectedness.
Spinoza and West~\cite{SW} showed that in fact in this setting only
$\CH{\ell+2}2$ cards are needed, generalizing the concept of reconstruction
number to $\ell$-reconstruction number.

For some easily reconstructed families it is natural to fix the number of
vertices kept in each card.  The $2$-deck of $G$ determines only $|E(G)|$ and
$|V(G)|$.  The $3$-deck determines also the number of edge incidences, whether
$G$ is triangle-free, and whether $G$ belongs to the family of complete
multipartite graphs, since that is true if and only if $P_2+P_1$ is not an
induced subgraph.

Results on $\ell$-reconstructibility are known for degree lists, connectedness,
trees, graphs with maximum degree $2$, random graphs, and graphs that are
disconnected, complete multipartite, or regular.  We describe these results in
the subsequent sections, and we include a few new results about these classes.
In Section~\ref{sec:trees} we offer a new short proof of N\'ydl's result that
$M_\ell(\bT)\ge 2\ell+1$.
In Section~\ref{sec:rpartite} we show that $n$-vertex graphs whose components
have at most $n-\ell$ vertices are $\ell$-reconstructible, while graphs with
components having more vertices are guaranteed to be $\ell$-reconstructible
only if the original Reconstruction Conjecture is true.
In Section~\ref{sec:regular} we show that $r$-regular graphs with connectivity
$1$ are $(r+1)$-reconstructible.

We mention two other models of reconstruction.  Levenshtein et al.~\cite{LKKM}
considered a local version of reconstruction in which the vertices of a graph
are labeled and for an $n$-vertex graph $G$ we have only $n$ cards: for each
vertex $v$, we are given the set $B_2(v)$ of the vertices at distance at most
$2$ from $v$ (but do not know which of them are adjacent to $v$). It was proved
in~\cite{LKKM} that every connected graph whose girth is at least $7$ and whose
diameter and minimum degree are at least $2$ is reconstructible in this model.
The authors also provided a graph with girth $6$ that is not reconstructible.
Levenshtein~\cite{Lev,Le} posed a more general problem where we know the sets
$B_t(v)$ instead of $B_2(v)$ and presented partial results on it.

In Section~\ref{sec:relation}, we describe the main results on a model 
that uses the term ``$k$-reconstructible'' with a different meaning.
In that model, due to Fra\"iss\'e~\cite{Fr}, we are reconstructing digraphs
(viewed as general binary relations), and we are told the identities of the
deleted vertices.  Our aim in describing these alternative models is to avoid
future confusion.

\section{Degree Lists}\label{sec:deglist}

The {\it degree list} (also called {\it degree sequence}) of a graph is the
multiset of degrees of the vertices in the graph.  When studying the
Reconstruction Problem, the first thing one learns is that the degree list of 
a graph is $1$-reconstructible.  It suffices to find the number $m$ of edges,
because the degree of each vertex $v$ is the difference between $m$ and the
number of edges in the card $G-v$.  The number $m$ is the information provided
by the $2$-deck (known by Observation~\ref{k-1}); one can also compute $m$
directly from the $(n-1)$-deck by $m=\sum_v |E(G-v)|/(n-2)$.

For decks with smaller cards, reconstruction of anything becomes more
difficult.  The pairs of graphs with the same $k$-decks mentioned in the 
introduction have the same degree list, but it is easy to construct examples
with different lists.

\begin{example}\label{list}
{\it For any positive $a,b,c$ with $a+b+c=t\ge4$, the graphs $C_t+P_1$ and
$S_{a,b,c}$ with $t+1$ vertices all have the same $3$-deck.}  Note that
$\Delta(C_t+P_1)=2$ and $\Delta(S_{a,b,c})=3$.  All these graphs have $t$
copies of $P_3$ and $t(t-3)$ copies of $P_2+P_1$ as induced subgraphs (this
involves a few cases for $S_{a,b,c}$), and the remaining $3$-vertex induced
subgraphs all have no edges.  Hence $\cD_3(C_t+P_1)=\cD_3(S_{a,b,c)})$,
for all such $(a,b,c)$.
\end{example}

Concerning thresholds, this easy example shows that guaranteeing
$\ell$-reconstructibility of the degree list or the maximum degree
requires $n>\ell+3$.  In this example, we considered $\cD_k(G)$ with
$k\in\{\Delta(G),\Delta(G)+1\}$.  Manvel observed that having one more vertex
in the cards prevents such examples.

\begin{theorem}[Manvel~\cite{Manvel}]\label{deltadeg}\label{tlist}
The degree list of a graph $G$ is determined by $\cD_{\Delta(G)+2}(G)$.
\end{theorem}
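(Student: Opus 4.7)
The plan is to recover the degree list of $G$ from $\cD_k(G)$, with $k=\Delta(G)+2$, by setting up and solving a triangular linear system whose unknowns are the multiplicities $a_d$ of each degree $d$ in $G$.

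First I would extract the parameters $n=\C{V(G)}$ and $\Delta(G)$ from the deck itself. The value $n$ is forced by $\C{\cD_k(G)}=\CH{n}{k}$. For $\Delta(G)$, the maximum vertex-degree appearing in any card equals $\min\{\Delta(G),k-1\}=k-2$, so reading this maximum off the deck certifies $\Delta(G)=k-2$ and, crucially, forces $\Delta(H)\le k-2$ for every graph $H$ with $\cD_k(H)=\cD_k(G)$. This step is the main obstacle: it is precisely what the ``$+2$'' in the hypothesis is used for, and without it hidden vertices of degree $\ge k-1$ in a competing graph could contribute extra unknowns to the system below.

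Next, for each $j\in\{0,1,\ldots,k-2\}$ let $N_j$ denote the total number of vertices of degree $j$ summed across all cards of $\cD_k(G)$; this quantity is computable directly from the deck. A vertex $v\in V(G)$ of degree $d$ has degree $j$ in the induced subgraph $G[S]$ on a $k$-set $S\ni v$ precisely when $S$ contains $j$ of the $d$ neighbors of $v$ together with $k-1-j$ of the $n-1-d$ non-neighbors, which can be done in $\CH{d}{j}\CH{n-1-d}{k-1-j}$ ways. Summing over vertices, and using $a_d=0$ for $d\ge k-1$, gives
$$N_j\;=\;\SE{d}{0}{k-2}a_d\CH{d}{j}\CH{n-1-d}{k-1-j}.$$

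To finish, I would invoke linear algebra. The coefficient matrix $M_{jd}=\CH{d}{j}\CH{n-1-d}{k-1-j}$ on $\{0,1,\ldots,k-2\}^2$ is upper triangular (the factor $\CH{d}{j}$ kills entries with $d<j$) with positive diagonal entries $M_{jj}=\CH{n-1-j}{k-1-j}$ (since $n\ge k$). Hence back-substitution starting from $j=k-2$ uniquely determines the vector $(a_0,\ldots,a_{k-2})$, which is the degree list of $G$.
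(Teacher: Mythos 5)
Your proposal is correct and takes essentially the same route as the paper: reading $\Delta(G)=k-2$ off the deck is the paper's observation that all induced subgraphs with at most $k$ vertices are visible in $\cD_k(G)$, and your identity $N_j=\sum_{d}a_d\CH{d}{j}\CH{n-1-d}{k-1-j}$ is exactly Lemma~\ref{count}, solved by the same back-substitution through successively smaller degrees as in Corollary~\ref{klist}. The only cosmetic difference is that you phrase the induction as inverting an explicit triangular matrix rather than solving for the $a_i$ one at a time.
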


The result for $\Delta(G)$ is easy since all induced subgraphs with at most
$k$ vertices are visible in the $k$-deck.  Hence $\cD_{\Delta(G)+2}(G)$ shows
that $G$ has a vertex of degree $\Delta(G)$ and none larger.  To determine the
degree list, one can then proceed by induction on $r$ to count the vertices of
degree $\Delta(G)-r$ using the following tool observed originally by
Manvel~\cite{Manvel}.

\begin{lemma}\label{count}
Let $G$ be an $n$-vertex graph.  The sum, over all cards in
the $k$-deck $\cD_k(G)$, of the number of vertices of degree $j$ in the card,
equals $\SE ij{j+n-k} a_i\CH ij\CH{n-1-i}{k-1-j}$, where $a_i$ is the number of
vertices having degree $i$ in $G$.
\end{lemma}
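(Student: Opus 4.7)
The plan is to prove the identity by a single double-counting of pairs $(S,v)$, where $S$ is a $k$-element subset of $V(G)$ and $v\in S$ is a vertex of degree exactly $j$ in the induced subgraph $G[S]$. The left-hand side of the asserted identity is, by definition, the number of such pairs: each $k$-subset $S$ yields one card $G[S]$ in $\cD_k(G)$, and the contribution of $S$ to the sum equals the number of $v\in S$ with $\deg_{G[S]}(v)=j$.

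Next I would count the same pairs by first choosing $v$. For a fixed vertex $v$ of degree $i$ in $G$, an induced subgraph $G[S]$ with $v\in S$ satisfies $\deg_{G[S]}(v)=j$ exactly when $|N_G(v)\cap S|=j$, where $N_G(v)$ denotes the neighborhood of $v$ in $G$ (and $v\notin N_G(v)$). Building such an $S$ amounts to choosing $j$ of the $i$ neighbors of $v$ together with $k-1-j$ of the $n-1-i$ non-neighbors of $v$ (other than $v$ itself). Thus the number of valid sets $S$ containing such a $v$ is $\CH ij\CH{n-1-i}{k-1-j}$. Grouping the $a_i$ vertices of degree $i$ together and summing over $i$ yields
\[
\SM v{V(G)}\CH{\deg_G(v)}j\CH{n-1-\deg_G(v)}{k-1-j}\;=\;\sum_i a_i\CH ij\CH{n-1-i}{k-1-j},
\]
which matches the right-hand side. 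The range of $i$ follows from the support of the binomial coefficients: $\CH ij$ requires $i\ge j$, and $\CH{n-1-i}{k-1-j}$ requires $i\le n-k+j$, so only $i\in\{j,\ldots,j+n-k\}$ contribute.

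There is no real obstacle here beyond verifying these supports; the argument is essentially Fubini applied to the incidence between $k$-subsets of $V(G)$ and vertices of prescribed induced degree, which is why Manvel's identity holds uniformly in $k$ and $j$. I would simply present the two countings and note the summation range as a brief remark.
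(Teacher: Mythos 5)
Your proof is correct and matches the paper's own argument: the paper likewise counts, for each vertex $v$ of degree $i$ in $G$, the $\CH ij\CH{n-1-i}{k-1-j}$ cards formed by taking $v$ with $j$ of its neighbors and $k-1-j$ of its nonneighbors. Your added remark pinning down the summation range from the supports of the binomial coefficients is a harmless elaboration of the same double count.
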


The lemma holds because a vertex $v$ appears and has degree $j$ in a card
in the $k$-deck if and only if the card is formed by choosing $v$ along with
$j$ of its neighbors and $k-1-j$ of its nonneighbors.  The lemma yields the
following corollary by solving for $a_i$ through successively smaller $i$.

\begin{corollary}[Manvel~\cite{Manvel}]\label{klist}
The degree list of a graph $G$ is determined when both $\cD_k(G)$ and the
numbers of vertices with degree $i$ for all $i$ at least $k$ are known.
\end{corollary}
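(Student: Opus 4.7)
The plan is to use Lemma~\ref{count} to set up a triangular linear system in the unknown degree multiplicities $a_0,\ldots,a_{k-1}$, and to solve it from $a_{k-1}$ down to $a_0$, treating the given values $a_k,\ldots,a_{n-1}$ as known parameters. Here $n=|V(G)|$ and $a_i$ denotes the number of vertices of $G$ having degree $i$.

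First, for each $j$ with $0\le j\le k-1$, the quantity
\[
N_j \;=\; \sum_{H\in\cD_k(G)} \#\{v\in V(H)\st \deg_H(v)=j\}
\]
is computable directly from the $k$-deck, since vertex degrees are visible in each card. Lemma~\ref{count} then gives
\[
N_j \;=\; \sum_{i=j}^{j+n-k} a_i\binom{i}{j}\binom{n-1-i}{k-1-j},
\]
which we regard as $k$ linear equations in the $k$ unknowns $a_0,\ldots,a_{k-1}$.

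Next, I would proceed by reverse induction on $j$, starting at $j=k-1$ and working down to $j=0$. In the equation indexed by $j$, the smallest index that contributes is $i=j$ itself, with coefficient $\binom{j}{j}\binom{n-1-j}{k-1-j}=\binom{n-1-j}{k-1-j}$, which is strictly positive whenever $n\ge k$. Every other term $a_i$ on the right side has $j<i\le j+n-k$ and falls into one of two categories: either $i\ge k$, in which case $a_i$ is given by hypothesis, or $j<i<k$, in which case $a_i$ has already been determined at an earlier stage of the descending induction. Thus the equation for $j$ isolates a single new unknown with nonzero coefficient, and the system unwinds to yield $a_{k-1},a_{k-2},\ldots,a_0$ in succession.

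The main point requiring care -- more bookkeeping than a real obstacle -- is the justification of the upper cutoff $i\le j+n-k$ in Lemma~\ref{count}: a vertex of $G$ whose degree exceeds $j+n-k$ cannot appear as a degree-$j$ vertex in any $k$-card, because it retains too many neighbors even after $n-k$ vertices are removed, so such indices contribute nothing and are legitimately omitted. Once the triangular shape and the nonvanishing of the diagonal coefficients $\binom{n-1-j}{k-1-j}$ are verified, the linear system has a unique solution, and the full degree list $(a_0,a_1,\ldots,a_{n-1})$ is determined.
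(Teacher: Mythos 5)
Your argument is correct and is exactly the route the paper takes: it derives the corollary from Lemma~\ref{count} ``by solving for $a_i$ through successively smaller $i$,'' which is precisely your descending triangular system with diagonal coefficients $\binom{n-1-j}{k-1-j}>0$ and the terms with $i\ge k$ supplied by hypothesis. You have simply written out the details that the paper leaves implicit.
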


\begin{example}
For sharpness of Theorem~\ref{deltadeg}, Manvel~\cite{Manvel} showed that the
maximum degree itself is not always determined by $\cD_{\Delta(G)+1}(G)$.
He constructed graphs $G$ and $H$ such that $\Delta(G)=k$, $\Delta(H)=k-1$, and
$\cD_k(G)=\cD_k(H)$.  Both graphs are forests of stars.  However, in
this construction the number of vertices is $(k+2)2^{k-2}$, exponential in $k$.
In particular, $G=\sum_i\CH{k}{2i}K_{1,k-2i}$ and
$H=\sum_i\CH{k}{2i+1}K_{1,k-1-2i}$.  From this Manvel concluded that
for all $k$ there exist nonisomorphic graphs with the same $k$-deck.
\end{example}

\begin{question}\label{deltaalt}
What is the smallest value of $n$ such that $n$-vertex graphs $G$ and $H$
with maximum vertex degrees $k$ and $k-1$ exist having the same $k$-deck?
\end{question}

Lemma~\ref{count} and Corollary~\ref{klist} are used as tools in 
reconstruction of degree lists.

\begin{theorem}[Chernyak~\cite{Che}]\label{Chernyak}
When $n\ge6$, the degree list of every $n$-vertex graph is $2$-reconstructible.
\end{theorem}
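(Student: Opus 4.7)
The plan is to apply Corollary~\ref{klist} with $k=n-2$: once $a_{n-2}$ and $a_{n-1}$ are determined, Lemma~\ref{count} unwinds all remaining $a_i$ by a backward recursion from $j=n-3$ down to $j=0$. So the problem reduces to recovering $a_{n-2}$ and $a_{n-1}$ from $\cD_{n-2}(G)$.

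Since $n-2\ge 4$, Observation~\ref{k-1} gives all induced-subgraph counts on at most $n-2$ vertices. In particular, for each $s\in\{0,1,\ldots,n-3\}$ the quantity $b_s=\sum_v\binom{d_v}{s}=\sum_i a_i\binom{i}{s}$ is known, since $b_s$ is a weighted count (by the number of degree-$s$ vertices of the subgraph) of induced $(s+1)$-vertex subgraphs. These $n-2$ linear equations in the unknowns $a_0,\ldots,a_{n-1}$ have two-dimensional solution space; using them, starting at $s=n-3$ and working downward, I would express every $a_i$ with $i\le n-3$ as an affine function of $a_{n-2}$ and $a_{n-1}$, leaving the identity $\sum_i a_i=n$ (i.e., $b_0=n$) as one linear relation between $a_{n-2}$ and $a_{n-1}$.

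To obtain a second, independent relation I would exploit information in the deck beyond $(b_s)_{s\le n-3}$. A natural candidate is
\[
\sigma=\sum_{C\in\cD_{n-2}(G)}\binom{u(C)}{2},
\]
where $u(C)$ is the number of vertices of degree $n-3$ in card $C$ (the ``universal'' vertices of $C$). Since each $u(C)$ can be read from the isomorphism type of $C$, the value $\sigma$ is determined by the deck; and a double count over pairs $\{v_1,v_2\}\subseteq V(G)$ simultaneously universal in a common card expresses $\sigma$ in terms of $a_{n-1},a_{n-2},a_{n-3}$ together with small structural invariants (such as the number of adjacent or non-adjacent pairs of high-degree vertices with a prescribed common non-neighbor), each of which is a count of an induced subgraph on at most $n-2$ vertices and so is determined by the deck. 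Combined with the relation from the previous step, this should pin down $a_{n-2}$ and $a_{n-1}$.

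The hard part will be carrying out the case analysis for this last step, according to the degrees $d(v_1),d(v_2)\in\{n-3,n-2,n-1\}$ and to how the non-neighborhoods of $v_1$ and $v_2$ overlap, and then verifying that after substituting the known structural counts the resulting equation is linearly independent from the first whenever $n\ge 6$. The threshold $n\ge 6$ is crucial precisely because it gives $n-2\ge 4$, so that every four-vertex induced-subgraph count is available to read off the structural invariants appearing in the expansion of $\sigma$; the analogous reasoning breaks down at $n=5$, consistent with Example~\ref{list}.
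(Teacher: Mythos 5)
The paper itself gives no proof of Theorem~\ref{Chernyak}: it is cited from Chernyak, with only the remark that it also follows from the stronger Theorem~\ref{deg3rec} together with a finite check of $6$-vertex graphs. So your argument has to stand on its own, and as written it has two genuine gaps. The setup is sound---reducing via Corollary~\ref{klist} to determining $a_{n-2}$ and $a_{n-1}$, and noting that $b_s=\sum_i a_i\binom{i}{s}$ is deck-determined for $0\le s\le n-3$---but your bookkeeping of the remaining degrees of freedom is off by one. Since $\binom{0}{s}=0$ for $s\ge1$, the unknown $a_0$ appears in no equation other than $b_0=n$; the back-substitution therefore uses $b_{n-3},\dots,b_1$ to express $a_{n-3},\dots,a_1$ in terms of $a_{n-2},a_{n-1}$, and $b_0=n$ is then consumed in determining $a_0$. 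It does not survive as a relation between $a_{n-2}$ and $a_{n-1}$: the solution set of your linear system is a two-dimensional affine space freely parametrized by $(a_{n-2},a_{n-1})$, so you need \emph{two} further independent deck-determined relations, and you have proposed only one.

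Second, the one extra relation you propose is not justified at the decisive point. Expanding $\sigma=\sum_C\binom{u(C)}{2}$ over pairs of vertices simultaneously universal in a common card produces, besides multiples of $\binom{a_{n-1}}{2}$, $a_{n-1}a_{n-2}$, and $a_{n-1}a_{n-3}$, terms such as the number of adjacent pairs of degree-$(n-2)$ vertices whose unique non-neighbors coincide (versus differ), and the number of degree-$(n-2)$/degree-$(n-3)$ pairs whose non-neighborhoods nest. These are counts of small configurations \emph{subject to exact degree constraints in $G$}; they are not induced-subgraph counts on at most four vertices, so the blanket assertion that each is determined by the deck because $n-2\ge4$ does not apply. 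Proving that such degree-constrained counts are recoverable from $\cD_{n-2}(G)$ is essentially the substance of the theorem, and it is precisely the step you defer. A richer and more standard source of relations here is the edge count of each individual card, which yields the multiset of deficiencies $m-|E(G-\{u,v\})|$ (equal to $d(u)+d(v)$ minus the adjacency indicator of $uv$); any single aggregate such as $\sigma$ discards most of that information. As it stands, the proposal is a plausible plan rather than a proof, and the claimed sharpness at $n=6$ cannot be checked until the missing relations are actually produced.
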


This result is sharp, because the $5$-vertex graphs $C_4+P_1$ and $S_{2,1,1}$
from Example~\ref{list} have the same $3$-deck but different degree lists.
Exact results were pushed one step further.

\begin{theorem}[Kostochka--Nahvi--West--Zirlin~\cite{KNWZ}]\label{deg3rec}
For $n\ge7$, the degree list of every $n$-vertex graph is $3$-reconstructible.
\end{theorem}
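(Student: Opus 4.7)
The plan is to invoke Corollary~\ref{klist} with $k=n-3$: the degree list of $G$ is determined by $\cD_{n-3}(G)$ as soon as one knows the three top-degree counts $a_{n-3}$, $a_{n-2}$, and $a_{n-1}$. Hence the whole task reduces to recovering these three numbers from the $(n-3)$-deck.

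For each $j\in\{0,1,\ldots,n-4\}$, Lemma~\ref{count} provides the banded identity
$$S_j\;=\;\sum_{i=j}^{j+3} a_i\binom{i}{j}\binom{n-1-i}{n-4-j},$$
where $S_j$ is the observable number of (card, vertex) pairs in which the vertex has degree $j$ inside the card. Since each coefficient polynomial $\binom{i}{j}\binom{n-1-i}{n-4-j}$ has degree $n-4$ in $i$ and these polynomials span all of degree at most $n-4$, the $n-3$ observables $\{S_j\}$ are equivalent to the first $n-3$ power-sum moments $M_0,M_1,\ldots,M_{n-4}$ of the degree sequence. This is three short of the $n$ moments needed to pin down a sequence supported on $\{0,1,\ldots,n-1\}$, mirroring the three top-degree unknowns we must determine: after substituting the moment information, $a_{n-1},a_{n-2},a_{n-3}$ form a basis for the remaining $3$-dimensional ambiguity.

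To close this gap I would extract additional deck-level equations by a pair-refinement of Lemma~\ref{count}. Since $n-3\ge 4$, Observation~\ref{k-1} makes $\cD_3(G)$ and $\cD_4(G)$ available, so all induced-subgraph counts on up to four vertices of $G$ are known: in particular $|E(G)|$, the triangle count, the $2K_2$-count, the $K_{1,3}$-count, the paw-count, the $(K_4{-}e)$-count, and the $K_4$-count. For each prescribed pattern $(d,d',\varepsilon)$ of degrees and adjacency on an ordered pair of distinct vertices in a card, the total count over cards of such patterns is, by an argument analogous to Lemma~\ref{count}, a known function of the $a_i$'s together with these already-available small-subgraph invariants. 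Restricting to patterns that force \emph{both} chosen vertices to have high degree in $G$ produces linear equations whose only unknowns are $a_{n-1},a_{n-2},a_{n-3}$ (with absolute terms in the known invariants). Three independent such equations pin the triple down, and Corollary~\ref{klist} then delivers the full degree list.

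The hard part is this last step: constructing three pair-level statistics whose coefficient matrix in $a_{n-1},a_{n-2},a_{n-3}$ is nonsingular for every $n\ge 7$, and correctly bookkeeping the contributions from middle-degree vertices that also fit the chosen patterns. Verifying full rank reduces to a finite linear-algebra check with binomial-coefficient entries in $n$. The boundary case $n=7$ looks most delicate, since the counting-lemma window only just opens wide enough for the strategy and several cancellations that hold generically degenerate there; it may require separate treatment.
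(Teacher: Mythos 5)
First, a point of reference: the survey does not actually prove this theorem --- it is quoted from the cited paper \cite{KNWZ}, so there is no in-paper argument to compare yours against step by step. Your opening reduction is sound and is indeed the standard first move: with $k=n-3$, Lemma~\ref{count} yields the banded system $S_j=\sum_{i=j}^{j+3}a_i\binom{i}{j}\binom{n-1-i}{n-4-j}$ for $0\le j\le n-4$, the coefficient of $a_j$ in $S_j$ is $\binom{n-1-j}{3}\neq 0$, and back-substitution (Corollary~\ref{klist}) shows that the whole degree list follows once $a_{n-1}$, $a_{n-2}$, $a_{n-3}$ are known. Up to that point your plan is correct.

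The gap is in the step you yourself flag as ``the hard part,'' and it is not merely unfinished --- the mechanism you propose does not work as stated. A pair-level statistic $T_{j,j',\varepsilon}$ (the number of pairs consisting of a card and an ordered pair of its vertices with prescribed card-degrees $j,j'$ and adjacency $\varepsilon$) is \emph{not} a function of the $a_i$ together with the induced-subgraph counts on at most four vertices. Expanding such a statistic over ordered pairs $(u,v)$ in $G$, the contribution of a pair depends not only on $\deg u$, $\deg v$, and whether $uv\in E(G)$, but also on the codegree $|N(u)\cap N(v)|$, because the three deleted vertices must be apportioned among common neighbors, private neighbors of each, and non-neighbors of both. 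Summing over pairs with a fixed degree pattern therefore produces codegree sums \emph{refined by the degrees of the endpoints}; the $3$- and $4$-decks give only the aggregate versions (e.g.\ $s_{P_3}(G)$ and the triangle count), not the refined ones. Restricting to ``high-degree'' patterns does not remove this dependence, since a vertex of degree $j$ in an $(n-3)$-card can have any degree in $\{j,\dots,j+3\}$ in $G$ and the codegree terms persist. Consequently the constant terms of your three proposed linear equations are not computable from the deck, the nonsingularity check never gets off the ground, and the triple $(a_{n-1},a_{n-2},a_{n-3})$ is not determined by your argument. This is exactly where the real content of the theorem lies, and a different idea (for instance, exploiting that complementation swaps $a_{n-1-i}(G)$ with $a_i(\overline{G})$, together with counting of specific card types) is needed to close it.
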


Theorem~\ref{deg3rec} is sharp: the $6$-vertex graphs $C_5+P_1$ and
$S_{2,2,1}$ and $S_{3,1,1}$ from Example~\ref{list} all have the same $3$-deck.
Note that since the $(n-2)$-deck determines the $(n-3)$-deck,
Theorem~\ref{deg3rec} combined with an analysis of $6$-vertex graphs implies
Theorem~\ref{Chernyak}.

By making more thorough use of Lemma~\ref{count}, Taylor~\cite{Taylor} obtained
a surprisingly small general threshold on the number of vertices for
$\ell$-reconstructibility of the degree list.

\begin{theorem}[Taylor~\cite{Taylor}]\label{Taylor}
For $n\ge g(\ell)$, the degree list of every $n$-vertex graph is 
$\ell$-reconstructible, where
$$
g(\ell) = (\ell-\log{\ell}+1)
\left( \er + \frac{\er \log{\ell} + \er +1}{(\ell-1) \log{\ell}-1} \right) +1.
$$
Here $\er$ denotes the base of the natural logarithm.
\end{theorem}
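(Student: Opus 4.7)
By Corollary~\ref{klist}, it suffices to determine, for every $d\ge k:=n-\ell$, the number $a_d$ of vertices of degree $d$ in $G$. Pass to the complement $\Gb$, whose $k$-deck is obtained from $\cD_k(G)$ by complementing each card, and write $b_j$ for the number of vertices of degree $j$ in $\Gb$. Since $b_j=a_{n-1-j}$, the task becomes determining $b_0,b_1,\ldots,b_{\ell-1}$.

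Lemma~\ref{count} applied to $\Gb$ yields, for each $j\in\{0,1,\ldots,k-1\}$, an equation
$$
\overline{L}_j \;=\; \SE{t}{0}{\ell} b_{j+t}\,\CH{j+t}{t}\,\CH{n-1-j-t}{\ell-t},
$$
in which $\overline{L}_j$ is computable from the deck. The coefficient of the leading unknown $b_j$ is $\CH{n-1-j}{\ell}$, of order $n^\ell/\ell!$, while the coefficient of the trailing unknown $b_{j+\ell}$ is $1$. Taking $j=0,1,\ldots,\ell-1$ produces $\ell$ equations in the $2\ell$ unknowns $b_0,b_1,\ldots,b_{2\ell-1}$.

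The plan has two stages. In the \emph{preliminary stage} I determine the next block $b_\ell,b_{\ell+1},\ldots,b_{2\ell-1}$. Each such $b_j$ counts vertices $v$ of $G$ with $\deg_G(v)=n-1-j\le k-1$, so the closed neighborhood $N_G[v]$ has at most $k$ vertices and is fully visible inside appropriate cards. An inclusion--exclusion over rooted ``extensions'' of the closed neighborhood expresses each $b_j$ in this range as a signed combination of induced subgraph counts $i(F,G)$ with $\C{V(F)}\le k$; all such counts are determined by $\cD_k(G)$. In the \emph{main stage}, the $\ell$ equations above are solved by descending induction on $j$: the equation at $j=\ell-1$ involves only $b_{\ell-1}$ together with the block just determined, so dividing by the leading coefficient $\CH{n-\ell}{\ell}$ yields $b_{\ell-1}$; each subsequent step uses the $b_{j'}$'s already recovered for $j'>j$, and divides by $\CH{n-1-j}{\ell}$.

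The hard part, and the source of the precise threshold $g(\ell)$, is the quantitative balancing in the preliminary stage: the inclusion--exclusion alternating sum has individual terms of size $\Theta(n^{\ell-t})$, and one must ensure that the cumulative propagation of error is strictly controlled by the leading binomial coefficient before back-substitution. The standard estimate $(1+1/m)^m<\er$ applied to ratios of successive binomial coefficients of the form $\CH{n-1-j-t}{\ell-t}\big/\CH{n-1-j}{\ell}$, combined with an optimization over the number of inclusion--exclusion levels (governed by $\log\ell$), produces exactly the lower bound $n\ge g(\ell)$ in the stated form. Once this is in place, both stages succeed, $b_0,b_1,\ldots,b_{\ell-1}$ are uniquely determined, and Corollary~\ref{klist} finishes the reconstruction of the degree list.
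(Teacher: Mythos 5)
The paper states this theorem only as a citation to Taylor and contains no proof of it, so there is no in-paper argument to compare against; judged on its own terms, your proposal has a genuine gap at its central step. The framing via Corollary~\ref{klist} and the complement is sound, and the ``main stage'' back-substitution is just the descending induction behind Corollary~\ref{klist} (modulo the small slip that the coefficient of $b_{j+\ell}$ in your displayed equation is $\binom{j+\ell}{\ell}$, not $1$). The problem is the preliminary stage. A card containing a vertex $v$ carries no marker indicating that all of $v$'s neighbors are present, so the fact that $N_G[v]$ ``fits inside'' some card does not let you count vertices of degree exactly $d$. The inclusion--exclusion needed to pass from ``$v$ dominates an induced copy of $F$'' to ``$N_G[v]$ induces exactly $F$'' must subtract over all extensions of $F$ by further neighbors of $v$, and such extensions have up to $\Delta(G)+1$ vertices, which can far exceed $k$; the alternating sum does not close up within subgraphs visible in the $k$-deck. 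What the deck actually yields is $\sum_{d'\ge d}\binom{d'}{d}a_{d'}$ for each $d\le k-1$, which is a triangular repackaging of Lemma~\ref{count} and is exactly as underdetermined as the system you started with, so the preliminary stage is circular: recovering $a_d$ for $d\le k-1$ from these sums requires the very quantities $a_{d'}$ with $d'\ge k$ that you are trying to find.

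Moreover, the claim of the preliminary stage is simply false without a hypothesis relating $n$ and $\ell$: Manvel's star forests $G=\sum_i\binom{k}{2i}K_{1,k-2i}$ and $H=\sum_i\binom{k}{2i+1}K_{1,k-1-2i}$ have the same $k$-deck, yet $a_{k-1}(G)=0$ while $a_{k-1}(H)=k$, and $k-1$ lies in the degree range your preliminary stage claims to handle. Hence any correct proof must use $n\ge g(\ell)$ in an essential quantitative way. Your sketch invokes it only through ``cumulative propagation of error,'' which has no meaning for an exact inclusion--exclusion, and the specific formula for $g(\ell)$ is asserted rather than derived. The route that actually produces such a threshold (and is what the survey alludes to by ``more thorough use of Lemma~\ref{count}'') is global rather than local: one supposes two graphs with the same $k$-deck have different degree lists, views the difference vector $c_i=a_i-a_i'$ as a nonzero integer solution of the homogeneous banded system coming from Lemma~\ref{count}, and shows by estimating ratios of the binomial coefficients that the entries of any such solution must grow so fast that $\sum_i|c_i|>2n$, a contradiction, unless $n<g(\ell)$. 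Reworking your argument along those lines would require replacing the preliminary stage entirely.
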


This result also shows that the degree list of an $n$-vertex graph is
reconstructible from the $k$-deck when $n$ is not too much larger than $k$,
regardless of the value of the maximum degree.  In particular,
$n\ge\frac{1+o(1)}{1-1/\er}k$ suffices.  This theorem seems rather strong
about reconstructibility but perhaps does not say much about
Question~\ref{deltaalt}, giving only a linear lower bound.

Theorem~\ref{Taylor} is strong but likely not sharp; answering the next
question would improve it and generalize Theorem~\ref{deg3rec}.
By Theorem~\ref{Taylor} the threshold is asymptotically at most $\er\ell$.
One could begin by seeking the largest graphs whose degree lists are not
$4$-reconstructible.

\begin{question}
For fixed $\ell\in\NN$, what is the least threshold $n_\ell$ such that the
degree list of every graph with at least $n_\ell$ vertices is
$\ell$-reconstructible?
\end{question}

\section{Connectedness}\label{sec:connect}
For graphs with at least three vertices, connectedness is $1$-reconstructible,
because an $n$-vertex connected graph has at least two connected $(n-1)$-cards,
while a disconnected graph has at most one connected $(n-1)$-card (when
$n\ge3$).  Manvel~\cite{Manvel} strengthened this result.

\begin{theorem}[Manvel~\cite{Manvel}]\label{thm: two deck}\label{manvel}
For $n \ge 6$, the connectedness of an $n$-vertex graph is $2$-reconstructible,
and the threshold for $n$ is sharp.
\end{theorem}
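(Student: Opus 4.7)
The plan is to prove sharpness directly from Example~\ref{list} and the positive direction by combining the determined degree list (Theorem~\ref{Chernyak}) with a count of connected $(n-2)$-cards. Concretely, for $n\ge 6$ I will show that any connected $G$ has at least three connected cards in $\cD_{n-2}(G)$ while any disconnected $H$ with the same deck has at most one, giving the required contradiction.

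For sharpness, take $t=4$ and $(a,b,c)=(2,1,1)$ in Example~\ref{list}: the five-vertex graphs $C_4+P_1$ (disconnected) and $S_{2,1,1}$ (a spider, hence a connected tree) share the common $3$-deck, which is the $(n-2)$-deck at $n=5$. Hence connectedness is not $2$-reconstructible at $n=5$, so the threshold must be at least~$6$.

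For $n\ge 6$, suppose for contradiction that $G$ is connected, $H$ is disconnected, and $\cD_{n-2}(G)=\cD_{n-2}(H)$. By Theorem~\ref{Chernyak} they share the same degree list, so $H$ has no isolated vertex and every component of $H$ has size at least~$2$. Let $c(F)$ denote the number of connected cards in $\cD_{n-2}(F)$; equality of decks immediately gives $c(G)=c(H)$. For the upper bound, observe that $H-\{u,v\}$ is connected only when the two deletions empty all but one component of $H$; since each component has at least two vertices, at most one component can be wiped out by two deletions, so $H$ must consist of exactly two components of sizes $2$ and $n-2$, and the unique connected card comes from deleting the size-$2$ component, yielding $c(H)\le 1$. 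For the lower bound, fix a spanning tree $T$ of $G$: if $T$ has at least three leaves, every pair of leaves $\{u,v\}$ leaves $T-\{u,v\}$ as a subtree spanning $V(G)\setminus\{u,v\}$, so $G-\{u,v\}$ is connected and we obtain at least $\binom{3}{2}=3$ connected cards; if $T$ has only two leaves, then $T$ is a Hamilton path $v_1\cdots v_n$, and the three pairs $\{v_1,v_2\}$, $\{v_{n-1},v_n\}$, $\{v_1,v_n\}$ each leave a subpath of $T$, giving three connected cards of $G$.

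The main obstacle I anticipate is the Hamilton-path subcase of the lower bound, since the naive count over pairs of leaves produces only one card when $T$ has exactly two leaves; the key observation is that removing a leaf together with its unique (degree-$2$) neighbor likewise leaves a connected subtree. Once that refinement is in place, combining $c(G)\ge 3$ with $c(H)\le 1$ yields the contradiction, and the hypothesis $n\ge 6$ is used only to invoke Theorem~\ref{Chernyak}, matching the sharpness threshold witnessed by Example~\ref{list}.
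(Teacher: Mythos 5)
Your argument is correct, and it is worth noting that the paper itself supplies no proof of the positive direction of Theorem~\ref{manvel}: it is stated as a cited result of Manvel, with only the sharpness half justified via Example~\ref{list} (the pair $C_4+P_1$, $S_{2,1,1}$), exactly as you do. Your proof of the positive direction is a clean self-contained route: deck equality forces $c(G)=c(H)$ where $c$ counts connected cards; the degree list (Theorem~\ref{Chernyak}) rules out isolated vertices in $H$, whence a disconnected $H$ has at most one connected $(n-2)$-card (only possible by deleting an entire $2$-vertex component); and a spanning tree of a connected $G$ yields at least three pairs whose deletion preserves connectedness, with your two-leaf refinement correctly handling the Hamilton-path case (for $n\ge3$ two leaves of a tree are nonadjacent, so deleting two leaves, or a leaf together with its endpoint-neighbour on a path, leaves a tree). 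This structure---use degree-list reconstructibility to exclude degree-$0$ vertices, then count connected cards---is in fact the same strategy the paper attributes to the $\ell=3$ analogue, where Theorem~\ref{conn3} is proved using Theorem~\ref{deg3rec}. Two small caveats: you lean on Chernyak's theorem, which is itself only cited here without proof and historically postdates Manvel's result, so your argument is not a reconstruction of Manvel's original proof but a derivation from a stronger imported ingredient (you only need the weak consequence that the deck certifies the absence of isolated vertices); and you should state explicitly that the number of connected cards is a multiset invariant counted over all $\binom n2$ pairs, so that the three distinct pairs in the lower bound really contribute three entries of the deck.
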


The threshold is sharp because the $5$-vertex graphs $C_4+P_1$ and $S_{2,1,1}$
have the same $3$-deck (Example~\ref{list}).  In fact, these graphs and their
complements are the only $5$-vertex graphs that are not
$2$-reconstructible~\cite{MR}.

All results that obtain a function $f(\ell)$ such that some property or class
of graphs is $\ell$-reconstructible for graphs with at least $f(\ell)$ vertices
provide support for Kelly's Conjecture.  For general $\ell$, this is known for
connectedness.

\begin{theorem}[Spinoza--West~\cite{SW}]\label{conn}
For $\ell\in\NN$, the connectedness of every $n$-vertex graph is 
$\ell$-reconstructible when $n>2\ell^{(\ell+1)^2}$
\end{theorem}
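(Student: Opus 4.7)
The plan is to argue by contradiction: suppose $G$ is connected and $H$ is disconnected with $\cD_{n-\ell}(G)=\cD_{n-\ell}(H)$ and $n>2\ell^{(\ell+1)^2}$. The main tool is the Kelly-style consequence of Observation~\ref{k-1}: from $\cD_{n-\ell}(G)$ one can compute, for every graph $F$ with $\C{V(F)}\le n-\ell$, the number of induced copies of $F$ in $G$. Hence $G$ and $H$ agree on all such induced-subgraph counts, and the argument proceeds by pitting these counts against structural features forced by connectedness.

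I would split the analysis by the largest component size of $H$. In the \emph{small-component case}, every component of $H$ has at most $n-\ell$ vertices, so the induced counts restricted to connected graphs of this size determine the component multiset of $H$. Since $G$ is connected, every induced copy in $G$ of a would-be component $C$ of $H$ must be incident to an edge leaving $C$; by counting induced copies of graphs of the form $C$ together with one extra vertex (adjacent or not), one forces a discrepancy with $H$, where at least one copy of $C$ sits in a separate component. In the \emph{dominant-component case}, $H$ has a single component $B$ with $\C{V(B)}>n-\ell$, and the remaining vertices form a graph $S$ on at most $\ell-1$ vertices. Here $S$ is detectable as an induced subgraph in both decks, and the task is to compare the local structure around copies of $S$ in $G$ (always joined to more of $G$) with that in $H$ (where one copy is isolated from $B$), ultimately producing a count that distinguishes $G$ from $H$.

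I expect the iteration/induction on $\ell$ to be the main obstacle. The threshold $2\ell^{(\ell+1)^2}$ is of tower type in $\ell$, which strongly suggests that the proof proceeds by peeling off one structural layer at a time and invoking the statement at a smaller value of $\ell$ on a pair of derived graphs; at each peeling step the vertex count required blows up polynomially by a factor on the order of $\ell^{\ell+1}$, and compounding this over roughly $\ell$ iterations yields the stated bound. The delicate points will be (i) showing that after removing a distinguishable small piece (such as $S$ in the dominant-component case, or the smallest component of $H$ in the small-component case) the remaining graphs still share enough deck information to apply the induction hypothesis, and (ii) calibrating the tower of constants so that the base case ($\ell=1$, which is classical) is reached before the vertex budget is exhausted.
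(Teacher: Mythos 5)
This is a survey; the paper states Theorem~\ref{conn} as a citation of Spinoza--West~\cite{SW} and contains no proof of it, so your proposal can only be judged on its own terms. Your setup is sound: assume $G$ connected and $H$ disconnected with $\cD_{n-\ell}(G)=\cD_{n-\ell}(H)$, use Lemma~\ref{kelly} to equate all induced-subgraph counts for graphs on at most $n-\ell$ vertices, and split on the size of the largest component of $H$. Your ``small-component case'' is in fact immediate and needs none of the machinery you sketch: if every component of $H$ has at most $n-\ell$ vertices, then Theorem~\ref{disconn} says $H$ is determined by its $(n-\ell)$-deck, so $G\cong H$, contradicting that $G$ is connected. (Alternatively, if every component has fewer than $n-\ell$ vertices, $H$ has no connected card while $G$ has one.)

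The genuine gap is the dominant-component case, which is where all of the content of the theorem lives and where the enormous threshold $2\ell^{(\ell+1)^2}$ is earned. There $H=B+S$ with $B$ connected on more than $n-\ell$ vertices and $\C{V(S)}\le \ell-1$, and your text reduces to ``compare the local structure around copies of $S$ \ldots ultimately producing a count that distinguishes $G$ from $H$'' --- that is a restatement of the goal, not an argument. No specific statistic is exhibited, and none of the obvious ones work directly: $S$ is a tiny graph that typically occurs as an induced subgraph an enormous number of times in both $G$ and $B$, so the single ``isolated'' copy of $S$ in $H$ is not visible in any count of the form you describe without a much more delicate analysis. Your fallback, an induction on $\ell$ applied to ``a pair of derived graphs,'' is also unsupported: you do not say what the derived graphs are, why they have equal decks at the smaller parameter, or how the claimed polynomial blow-up per step is controlled (note also that $2\ell^{(\ell+1)^2}$ is a single exponential in $\ell\log\ell$, not tower-type, so the shape of the bound does not by itself certify an $\ell$-fold compounding of that kind). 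As it stands the proposal disposes of the easy case and leaves the hard case open.
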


The threshold in Theorem~\ref{conn} is not sharp.

\begin{conjecture}[Spinoza--West~\cite{SW}]\label{connconj}
For $n\ge 2\ell+2$, the connectedness of an $n$-vertex graph is
$\ell$-reconstructible, and the threshold for $n$ is sharp.
\end{conjecture}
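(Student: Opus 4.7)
The plan is to treat the threshold and its sharpness separately.

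For sharpness, the case $\ell=2$ is already handled by $C_4+P_1$ (disconnected) and $S_{2,1,1}$ (connected) from Example~\ref{list}, both on $2\ell+1=5$ vertices with a common $3$-deck. For general $\ell$ I would look for analogous pairs on $2\ell+1$ vertices by perturbing the pair $\{P_{2\ell},\, C_{\ell+1}+P_{\ell-1}\}$ of Section~\ref{sec:maxdeg2}---for instance attaching a pendant vertex at a carefully chosen location, or subdividing an edge in each member---and verifying that the perturbation preserves the common deck. This verification reduces to a direct card-count in each candidate family.

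For the upper bound, assume $G$ and $H$ are $n$-vertex graphs with $n\ge2\ell+2$ and $\cD_{n-\ell}(G)=\cD_{n-\ell}(H)$; suppose for contradiction that $G$ is disconnected and $H$ is connected. Since $n-\ell\ge\ell+2\ge2$, iteratively deleting $\ell$ leaves of a spanning tree of $H$ produces a connected $(n-\ell)$-card of $H$, so $\cD_{n-\ell}(G)$ must contain at least one connected card. On the $G$-side, a card is connected iff the $\ell$ deleted vertices cover every component of $G$ except one, say $B$, with the remaining part of $B$ still connected. Since $n\ge2\ell+1$, two components each of size $\ge n-\ell$ cannot coexist, so there is a unique ``large'' component $B$ with $c:=\C{V(B)}\ge n-\ell$, while the remainder $R=G-B$ has $r:=n-c\le\ell$ vertices.

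The next step is to recover $R$ from $\cD_{n-\ell}(G)$. For each isomorphism class $F$ of graphs on at most $\ell$ vertices, let $\mu(F)$ count the cards in $\cD_{n-\ell}(G)$ that admit $F$ as a disjoint union of ``small'' connected components separated from the main body of the card. I would set up a linear system in the spirit of Lemma~\ref{count}, expressing each $\mu(F)$ as a number of induced copies of $F$ realized as a union of whole components of $R$, plus auxiliary contributions coming from small induced subgraphs of $B$ that happen to separate themselves after deletion. For $n\ge 2\ell+2$ the ``room'' in each card should make this system invertible via M\"obius inversion on the lattice of induced subgraphs, recovering $R$ (hence a fixed set of $r\le\ell$ vertices of $G$ with no edges to the rest). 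Any invariant of $\cD_{n-\ell}$ that certifies the existence of such a set must then also hold for $H$, contradicting the connectedness of $H$.

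The main obstacle is precisely the invertibility of this counting system at the tight threshold. The contribution of genuine components of $R$ has to be separated from the contribution of isomorphic induced subgraphs of $B$ that incidentally appear as components of cards; this separation becomes clean exactly when $n-\ell$ strictly exceeds the size of every component of $R$, which is what forces $n\ge2\ell+2$ rather than $n\ge2\ell+1$. Controlling the $B$-side contribution without any structural hypothesis on $B$ is the delicate step, and likely demands a careful combination of Manvel-style card identities with inclusion-exclusion on the induced-subgraph lattice. This is exactly the barrier that prevents the argument behind Theorem~\ref{conn} from reaching the conjectured threshold, and overcoming it is what a full proof of the conjecture will have to do.
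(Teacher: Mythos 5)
You should first recognize that the statement you are proving is an \emph{open conjecture}: the paper records it as Conjecture~\ref{connconj} precisely because no proof is known, and the best general bound it reports is Theorem~\ref{conn}, which needs $n>2\ell^{(\ell+1)^2}$. So there is no proof in the paper to compare against; the only question is whether your argument closes the gap, and it does not. For the upper bound, your reduction to a disconnected $G$ having a unique component $B$ with at least $n-\ell$ vertices and a remainder $R$ on at most $\ell$ vertices is sound (and is the standard starting point), but the entire content of the problem is the step you defer: showing that your counting system for the quantities $\mu(F)$ is invertible, i.e., that the deck lets you distinguish a genuine small component of $G$ from an induced subgraph of $B$ that merely shows up as an isolated piece of a card after $\ell$ deletions. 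You acknowledge this yourself (``the main obstacle is precisely the invertibility of this counting system''), and no mechanism is offered for why $n\ge 2\ell+2$ should suffice; invoking ``M\"obius inversion on the lattice of induced subgraphs'' names a technique rather than supplying an argument. This separation problem is exactly why Theorem~\ref{conn} only reaches a superexponential threshold, so the proposal restates the known difficulty rather than resolving it. The closing sentence (``any invariant \dots must then also hold for $H$'') is likewise circular: producing such an invariant is the theorem to be proved.

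The sharpness half is also off target. Sharpness of the threshold $2\ell+2$ requires, for each $\ell$, a connected graph and a disconnected graph on $2\ell+1$ vertices with the same $(\ell+1)$-deck. The pair $\{P_{2\ell},C_{\ell+1}+P_{\ell-1}\}$ lives on $2\ell$ vertices, not $2\ell+1$, and only shows that $n\ge 2\ell+1$ is necessary---one short of what you need. For $\ell=2$ the pair $\{C_4+P_1,S_{2,1,1}\}$ of Example~\ref{list} does the job, but the paper explicitly states the authors' belief that for $\ell\ge3$ no such pair on $2\ell+1$ vertices exists (they expect connectedness to be $\ell$-reconstructible whenever $\ell<\CL{n/2}$, apart from that one exception), so a search by ``perturbing'' the $2\ell$-vertex pair is working against the evidence, and in any case you verify no candidate. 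Both halves of the proposal correctly identify the obstacles but establish neither claim.
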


For $\ell=2$, the $5$-vertex graphs $C_4+P_1$ and $S_{2,1,1}$ again show that
the conjecture is sharp.  For larger $\ell$ the right answer may be $2\ell+1$,
which is needed due to the example $\{P_{2\ell},C_{\ell+1}+P_{\ell-1}\}$
(see Section~\ref{sec:maxdeg2}).  There are three sets of two $7$-vertex graphs
that have the same $4$-deck; none consists of a connected and a disconnected
graph.  N\'ydl's graphs showing that $M_\ell$ grows superlinearly are all
connected.  We believe that connectedness of an $n$-vertex graph is
$\ell$-reconstructible whenever $\ell<\CL{n/2}$ (except for
$\{C_4+P_1,S_{2,1,1}\}$).

For $\ell=3$, Spinoza and West~\cite{SW} improved the threshold in
Theorem~\ref{conn} to $n\ge25$.  The exact answer was found later.

\begin{theorem}[Kostochka--Nahvi--West--Zirlin~\cite{KNWZ}]\label{conn3}
For every graph with at least seven vertices, connectedness is
$3$-reconstructible.
\end{theorem}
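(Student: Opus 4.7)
The plan is to proceed by contradiction: assume $n\ge7$ and $n$-vertex graphs $G$ and $H$ share the $(n-3)$-deck with $G$ connected and $H$ disconnected.

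First I would extract coarse information from the deck. By Theorem~\ref{deg3rec}, $G$ and $H$ have the same degree list; since $G$ is connected on $n\ge2$ vertices, $\delta(G)\ge1$, so $H$ also has no isolated vertex, and every component of $H$ has size at least $2$. Second, every connected graph on $n\ge 4$ vertices has a connected $(n-3)$-card: fix a spanning tree and peel three leaves successively, using that every tree on at least $4$ vertices has at least two leaves (each peel preserves tree-connectedness, and the resulting induced subgraph of $G$ contains a spanning subtree). Hence $H$ too has a connected $(n-3)$-card. In a disconnected graph a connected $(n-3)$-card can only arise when the three deleted vertices exhaust all components except one and leave that one connected; combined with the previous size bound, this forces $H\cong C_1+J$, where $|C_1|=n-|J|$ and $J\in\{K_2,K_3,P_3\}$.

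The principal obstacle is ruling out each of these three structures. Iterating Observation~\ref{k-1} yields $\cD_k(G)$ for all $k\le n-3$, so the number of induced copies in $G$ of any graph on at most $n-3$ vertices is a deck-invariant. For $J=K_2$, the decisive feature of $H$ is the presence of an edge $uv$ both of whose endpoints are leaves adjacent only to each other, whereas every edge of a connected graph on at least three vertices has some external neighbor. Setting $f(uv):=|N(\{u,v\})\setminus\{u,v\}|$, the total count across the $(n-3)$-deck of $K_2$-components of cards equals $\sum_{uv\in E,\,f(uv)\le3}\binom{n-2-f(uv)}{3-f(uv)}$; the $K_2$-edge of $H$ alone contributes $\binom{n-2}{3}$ through $f=0$, a value unattainable in any connected $G$. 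Further deck-invariants (such as $\sum_v d(v)^2$, derived from the degree list, and the triangle count) control the contributions of edges with $f\in\{1,2,3\}$ tightly enough to turn the matching identity into a contradiction. Analogous invariants built from $P_3$-components and $K_3$-components (counting induced $P_3+(n-6)K_1$ and $K_3+(n-6)K_1$ subgraphs, respectively) would handle the remaining two cases $J=P_3$ and $J=K_3$. The delicate part, where I expect most of the work to lie, is converting these linear identities into strict contradictions under the common degree list, which is the kind of careful sharp-threshold arithmetic characteristic of~\cite{KNWZ}.
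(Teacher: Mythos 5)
The survey itself does not prove this theorem: it cites the external paper~\cite{KNWZ} and offers only the one-line hint that the proof there uses Theorem~\ref{deg3rec} to reduce to graphs with exactly two vertices of degree $1$ and none of degree $0$. Your opening is sound and consistent with that strategy. Invoking Theorem~\ref{deg3rec} to equate the degree lists, concluding $\delta(H)\ge 1$, observing that a connected $G$ has a connected $(n-3)$-card (spanning tree, peel leaves), and deducing that the disconnected reconstruction must have the form $C_1+J$ with $J\in\{K_2,K_3,P_3\}$ are all correct and essentially forced first moves.

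The gap is that the entire substance of the proof --- actually ruling out the three cases --- is not carried out. Your one explicit counting identity (summing the number of $K_2$-components over all cards of the $(n-3)$-deck) does not by itself come close to a contradiction. It is true that the $K_2$-component of $H$ contributes $\binom{n-2}{3}$ via the $f=0$ term and that no edge of a connected graph on at least three vertices has $f=0$; but a connected graph can have $\Theta(n)$ edges with $f(uv)=1$ (for instance many triangles sharing a single vertex), each contributing $\binom{n-3}{2}$, so the connected side of your identity can be of order $n^3$ and easily exceed $\binom{n-2}{3}$. Thus the identity constrains but does not separate the two sides, and it is far from clear that the auxiliary invariants you name ($\sum_v d(v)^2$ and the triangle count) close the gap; you yourself flag this step as ``where I expect most of the work to lie,'' but that step \emph{is} the theorem. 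The published argument is a lengthy analysis that, among other things, exploits the $3$-reconstructibility of the degree list to pin down the vertices of degree $0$ and $1$ and then works with several families of subgraph counts simultaneously. As written, your proposal is a plausible and correctly executed opening followed by a statement of intent, not a proof.
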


Theorem~\ref{conn3} is sharp due to $\{C_5+P_1,S_{2,2,1},S_{3,1,1}\}$
(Example~\ref{list}).  When combined with a short analysis of $6$-vertex
graphs, Theorem~\ref{conn3} implies Theorem~\ref{manvel}.  The proof of 
Theorem~\ref{conn3} uses Theorem~\ref{deg3rec} to reduce the problem to graphs
with exactly two vertices of degree $1$ and none of degree $0$.

Toward Conjecture~\ref{connconj}, it would be interesting to find a substantial
improvement of the threshold in Theorem~\ref{conn} or to find the largest two
graphs, one connected and one disconnected, that have the same deck of
subgraphs with four vertices deleted.

\section{Graphs with Maximum Degree $2$}\label{sec:maxdeg2}

In Problem 11898 of the American Mathematical Monthly, Stanley posed a
question related to reconstructing $2$-regular graphs from their $k$-decks.

\begin{problem}[Stanley \cite{Stanley}]\label{stanley}
Let $n$ and $k$ be integers, with $n\ge k \ge 2$.  Let $G$ be a graph with $n$
vertices whose components are cycles of length greater than $k$.  Let $i_k(G)$
be the number of $k$-element independent sets of vertices of $G$. Show that
$i_k(G)$ depends only on $k$ and $n$.  
\end{problem} 

Let $s(G,H)$ denote the number of induced subgraphs of $G$ isomorphic to $H$.
Stanley's problem asserts $s(G,\overline{K}_k)=s(G',\overline{K}_k)$ for
$n$-vertex $2$-regular graphs $G$ and $G'$ whose components have length greater
than $k$ (here $\Hb$ denotes the complement of $H$).  Stanley's proposed
solution of Problem~\ref{stanley} used generating functions.

Independent sets are just one type of $k$-vertex induced subgraph.  In a graph
with maximum degree $2$ whose cycles have more than $k$ vertices, all
$k$-vertex induced subgraphs are {\it linear forests}, meaning disjoint unions
of paths.  By looking at a larger class of graphs, Spinoza and West gave
a bijective proof by induction on $k$ that proves the same conclusion for
the number of subgraphs isomorphic to any $k$-vertex linear forest and thereby
proves that the graphs with the stated property all have the same $k$-deck.

\begin{theorem}[Spinoza--West~\cite{SW}]\label{main}
Let $G$ and $G'$ be graphs with maximum degree $2$ having the same number
of vertices and the same number of edges.  If every component in each graph is
a cycle with at least $k+1$ vertices or a path with at least $k-1$ vertices,
then $\cD_k(G)=\cD_k(G')$.
\end{theorem}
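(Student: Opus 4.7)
The plan is to proceed by induction on $k$, establishing for each $k$-vertex linear forest $F$ a bijection between induced copies of $F$ in $G$ and in $G'$. The base cases $k\le 2$ are immediate, since $\cD_k$ depends only on $|V|$ and $|E|$ for $k\le 2$, and every $k$-card is a linear forest because all cycle components have at least $k+1$ vertices.

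For the inductive step, I would reduce to verifying invariance of the $k$-deck under three elementary moves that preserve $|V|$, $|E|$, and the hypothesis: (i) $P_a+P_b \leftrightarrow P_{a-1}+P_{b+1}$, (ii) $C_a+P_b \leftrightarrow P_{a+b}$, and (iii) $C_a+C_b \leftrightarrow C_{a+b}$, with all component sizes kept within the hypothesis. Any two valid configurations with the same $|V|$ and $|E|$ are connected by a sequence of such moves: first use (ii) to absorb every cycle into a path whenever any path component is present, then use (i) to redistribute path lengths; otherwise all components are cycles and (iii) merges them into a single cycle. For each move I would construct a bijection between induced copies of $F$ on the two sides. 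The natural vertex identification handles generic selections, while selections straddling the \emph{pivot} of the move (the junction where the adjacency structure changes) require a compensating rearrangement. Using the explicit formulas $s(P_n,F)=\frac{m!}{\prod n_j!}\binom{n-k+1}{m}$ and $s(C_n,F)=\frac{n(m-1)!}{\prod n_j!}\binom{n-k-1}{m-1}$ (where $F$ has $m$ components and $n_j$ of them are copies of $P_j$), the invariance of $s(G,F)$ under each move reduces to a Vandermonde-type binomial identity applied to the sum over decompositions $F=F_1+F_2+\cdots$ into contributions from the distinct components of $G$.

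The main obstacle will be the bookkeeping for the bijections near the pivot of each move. The induction on $k$ provides the crucial leverage here: after fixing the portion of the $k$-selection that lies in a bounded neighborhood of the pivot, the remainder has fewer than $k$ vertices and sits inside a smaller configuration that still satisfies the hypothesis for the reduced parameter, so by the inductive hypothesis the remainders biject. The most delicate case is move (iii), in which the cycle-merge creates two cut-points that must be handled simultaneously; here one applies the bijection in stages, first peeling off contributions near one cut-point and then the other, each time shrinking $k$.
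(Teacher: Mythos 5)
Your plan is essentially the paper's own proof: the three elementary moves you list are exactly the three statements of Claim~\ref{twocomp} (path-length shifting, cycle-into-path absorption, and cycle merging), the reduction of the theorem to these moves via disjoint-union invariance is Lemma~\ref{samedeck}, and the bijective argument by induction on $k$ (with the path-shifting move as the delicate key case) is precisely how Spinoza and West prove it. The connectivity-of-configurations argument and the counting formulas are consistent supplements, not a different route.
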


Important cases of the theorem, and indeed its proof, are captured by the
following three statements, among which the third is the key, proved
inductively.

\begin{claim}\label{twocomp}

$\cD_k(C_{q+r})=\cD_k(C_q+C_r)$ if $q,r\ge k+1$,

$\cD_k(P_{q+r})=\cD_k(C_q+P_r)$ if $q\ge k+1$ and $r\ge k-1$, and

$\cD_k(P_{q-1}+P_r)=\cD_k(P_q+P_{r-1})$ if $q,r\ge k$.
\end{claim}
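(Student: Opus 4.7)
The plan is to verify each identity at the level of counts: for every linear forest $F$ on $k$ vertices, show that $s(G,F) = s(G',F)$ for the paired graphs $G$ and $G'$. In all three statements each component of each host graph has at least $k+1$ vertices (when cyclic) or at least $k-1$ vertices (when a path), so every $k$-vertex induced subgraph is a linear forest, and only such $F$ need be checked.

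The main tool is the closed-form count
\[
s(P_n, F) = M(F)\binom{n-k+1}{c},
\]
where $F$ has $c$ components and $M(F) = c!/\prod_\ell m_\ell!$ is the number of orderings of the multiset of component sizes (with $m_\ell$ the number of components of size $\ell$). This follows by fixing an ordering of the components of $F$, placing them left-to-right in $P_n$, and distributing the $n-k$ unused positions into $c-1$ interior gaps (each of size at least $1$) and two exterior gaps (each of size at least $0$). For $C_n$ with $n \ge k+1$, a missed-vertex argument yields $s(C_n,F) = \tfrac{n}{n-k}\,s(P_{n-1},F)$. With these formulas, Claims~(1) and~(2) follow by expanding $s(C_q+C_r, F)$ and $s(C_q+P_r, F)$ as sums over ordered partitions $(F_1,F_2)$ of $F$'s components and simplifying via Vandermonde-style convolutions.

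For Claim~(3), expand both sides as sums over ordered partitions $(F_1,F_2)$ with $F_i$ having $k_i$ vertices and $c_i$ components:
\[
s(P_q+P_{r-1}, F) = \sum_{(F_1,F_2)} M(F_1)M(F_2)\binom{q-k_1+1}{c_1}\binom{r-k_2}{c_2},
\]
with an analogous expression for $s(P_{q-1}+P_r, F)$ in which the $+1$ sits on the other binomial factor. Applying Pascal's rule $\binom{N+1}{c} = \binom{N}{c} + \binom{N}{c-1}$ to those $+1$ factors, the common parts cancel in the difference, leaving
\[
s(P_q+P_{r-1}, F) - s(P_{q-1}+P_r, F) = \sum_{(F_1,F_2)} M(F_1)M(F_2)\left[\binom{q-k_1}{c_1-1}\binom{r-k_2}{c_2} - \binom{q-k_1}{c_1}\binom{r-k_2}{c_2-1}\right].
\]
I would then proceed by induction on $k$, with the base cases $k \le 1$ trivial. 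The inductive step is to build a sign-reversing bijection on ordered partitions that transfers a distinguished component of $F$ between $F_1$ and $F_2$ so that the two bracketed terms cancel in pairs; the hypothesis $q, r \ge k$ ensures the relevant binomial coefficients are nonnegative and that the transferred component has room on the receiving side. The main obstacle is keeping track of the multinomial weights $M(F_i)$ during the transfer, since they depend on the component-size multiplicities in each $F_i$; grouping the partitions by the size of the shifted component and invoking the inductive hypothesis on the linear forest obtained by peeling off that component keeps the bookkeeping consistent and closes the induction.
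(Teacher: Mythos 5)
Your framework is sound and is close in spirit to the Spinoza--West argument that this survey cites: under the stated hypotheses every $k$-card of every host graph is a linear forest, so it suffices to match $s(G,F)$ for each $k$-vertex linear forest $F$, and your formulas $s(P_n,F)=M(F)\binom{n-k+1}{c}$ and $s(C_n,F)=\frac{n}{n-k}\,s(P_{n-1},F)$ are both correct. The gaps are in the two places where you wave at the computation. First, parts (1) and (2) do not ``follow by Vandermonde-style convolutions'': in the expansion $\sum_{(F_1,F_2)} s(C_q,F_1)\,s(P_r,F_2)$ the binomial coefficients have upper arguments involving $k_1=|V(F_1)|$, which is not determined by $c_1$ (for $F=P_1+P_3$ the two splits with $c_1=1$ contribute different amounts), and the cycle formula carries the extra factor $q/(q-k_1)$; no single-index convolution such as $\sum_j\binom{x}{j}\binom{y}{n-j}=\binom{x+y}{n}$ collapses such a sum. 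The identities you need are of Rothe--Hagen/Abel type, or else you should prove (1)--(2) by another route, e.g.\ first extend (3) to linear-forest hosts via Lemma~\ref{samedeck} and then use the averaging identity $(n-k)s(G,F)=\sum_{v}s(G-v,F)$ to reduce the cycle cases to path cases by induction.

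Second, in part (3) your displayed difference formula is correct, but the proposed ``sign-reversing bijection that transfers a distinguished component between $F_1$ and $F_2$'' cannot cancel the bracketed terms pairwise: moving a component of size $a$ changes $(k_1,c_1)$ to $(k_1-a,\,c_1-1)$, sending $\binom{q-k_1}{c_1-1}$ to $\binom{q-k_1+a}{c_1-2}$, which does not match the second bracketed term of the image partition. The correct completion, which your last sentence gestures at, is to reinterpret $M(F_1)\binom{q-k_1}{c_1-1}$ as the number of copies of $F_1$ in $P_q$ containing an endpoint of $P_q$; then the first sum counts copies of $F$ in $P_q+P_{r-1}$ anchored at that endpoint, and the second counts copies of $F$ in $P_{q-1}+P_r$ anchored at an endpoint of $P_r$. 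Peeling off the anchoring component $P_a$ together with the forbidden adjacent vertex turns these into $\sum_a s(P_{q-a-1}+P_{r-1},\,F\setminus P_a)$ versus $\sum_a s(P_{q-1}+P_{r-a-1},\,F\setminus P_a)$, and the summands agree by applying the inductive hypothesis for $k-a$ a total of $a$ times; one must check that each intermediate pair satisfies $q',r'\ge k-a$, which $q,r\ge k$ does guarantee, and handle the degenerate cases where $q-a-1\le 0$. Writing that out is what actually closes the induction; as literally stated, the cancellation step would fail.
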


The proof of Theorem~\ref{main} reduces to Claim~\ref{twocomp}
by the following natural lemma.

\begin{lemma}\label{samedeck}
If $G$, $G'$, and $H$ are graphs, then $\cD_k(G)=\cD_k(G')$ if and only if
$\cD_k(G+H)=\cD_k(G'+H)$.
\end{lemma}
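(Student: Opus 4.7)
The plan is to exploit the multiplicative behavior of induced-subgraph counts under disjoint union. For any graphs $X,Y$ and any $F$ with $|V(F)|=k$, every $k$-vertex subset $S$ of $X+Y$ splits uniquely as $A\cup B$ with $A\subseteq V(X)$ and $B\subseteq V(Y)$, and $(X+Y)[S]=X[A]+Y[B]$. Grouping such pairs by the isomorphism types of $X[A]$ and $Y[B]$ yields the identity
\[
s(X+Y,F)\;=\;\sum_{F_1+F_2\cong F} s(X,F_1)\,s(Y,F_2),
\]
where the sum runs over ordered pairs of isomorphism classes $(F_1,F_2)$ with $F_1+F_2\cong F$, and $s(Z,F')$ denotes the number of induced copies of $F'$ in $Z$. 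By Observation~\ref{k-1}, knowing $\cD_k(Z)$ is equivalent to knowing $s(Z,F')$ for every $F'$ with $|V(F')|\le k$.

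The forward direction is then immediate: if $\cD_k(G)=\cD_k(G')$, then $s(G,F_1)=s(G',F_1)$ for all $F_1$ with $|V(F_1)|\le k$, so substituting into the identity (with $Y=H$) yields $s(G+H,F)=s(G'+H,F)$ for every $k$-vertex $F$, hence $\cD_k(G+H)=\cD_k(G'+H)$.

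For the converse, I would assume $\cD_k(G+H)=\cD_k(G'+H)$ and prove $s(G,F_1)=s(G',F_1)$ for every $F_1$ with $|V(F_1)|\le k$ by induction on $|V(F_1)|$. The base case $F_1=\varnothing$ is trivial. For the inductive step, apply the identity with $F=F_1$: the decomposition $(F_1,\varnothing)$ contributes exactly $s(G,F_1)$, while every other decomposition $(F_1',F_2')$ with $F_2'\ne\varnothing$ has $|V(F_1')|<|V(F_1)|$, so by induction $s(G,F_1')=s(G',F_1')$. Writing the analogous identity for $G'$ and subtracting, all lower-order terms cancel, forcing $s(G,F_1)=s(G',F_1)$.

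The only real subtlety is bookkeeping the decompositions $F_1+F_2\cong F$ when $F$ has repeated connected components, but since each ordered pair of vertex subsets $(A,B)$ lies in exactly one term of the identity, this is routine.
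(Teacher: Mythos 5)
Your proof is correct, and both directions are sound. The paper itself states Lemma~\ref{samedeck} without proof (calling it a ``natural lemma'' and deferring to~\cite{SW}), so there is nothing to compare against line by line; but your convolution identity $s(X+Y,F)=\sum_{F_1+F_2\cong F}s(X,F_1)\,s(Y,F_2)$ is exactly the standard mechanism, the partition of $k$-sets $S$ into $(S\cap V(X),\,S\cap V(Y))$ makes the identity exact even when $F$ has repeated components (each pair $(A,B)$ lands in precisely one term, as you note), and the converse via induction on $|V(F_1)|$ works because the only term involving $s(G,F_1)$ itself is the one indexed by $(F_1,\varnothing)$, whose coefficient $s(H,\varnothing)=1$ is nonzero, so the system is triangular and solvable.

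One small caveat worth recording: the step ``knowing $\cD_k(Z)$ is equivalent to knowing $s(Z,F')$ for every $F'$ with $|V(F')|\le k$'' uses Observation~\ref{k-1} together with the fact that $|\cD_k(Z)|=\binom{|V(Z)|}{k}$ recovers $|V(Z)|$, and this requires $|V(Z)|\ge k$. Indeed the lemma as literally stated fails in the degenerate case of smaller graphs: with $G=K_1$, $G'=K_2$, $H=K_1$, $k=3$, one has $\cD_3(G)=\cD_3(G')=\varnothing$ but $\cD_3(G+H)\ne\cD_3(G'+H)$. This is a defect of the statement rather than of your argument---every application in the paper has $|V(G)|,|V(G')|\ge k$---but you should state that hypothesis explicitly where you invoke the equivalence.
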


For every graph $G$ with maximum degree $2$, Theorem~\ref{main} provides
a lower bound for the value $k$ such that $\cD_k(G)$ determines $G$ and
hence an upper bound on the maximum reconstructibility.  Except for some
small instances, these bounds turn out to be sharp.  Without giving the
complete details of the statement, the result is the following.

\begin{theorem}\label{rho2}
Let $G$ be a graph with maximum degree $2$.  If $m$ is the maximum number of
vertices in a component, $F$ is a component with $m$ vertices,
and $m'$ is the maximum number of vertices in a component other than $F$,
then $G$ is $k$-deck reconstructible if and only if
$k\ge\max\{\FL{m/2}+\epsilon,m'+\epsilon'\}$, where $\epsilon=1$ if $F$ is
a path (otherwise $\epsilon=0$), and $\epsilon'\in\{0,1,2\}$.
\end{theorem}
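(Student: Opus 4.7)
The plan is to prove both directions using the identities in Claim~\ref{twocomp} and Lemma~\ref{samedeck}. For \emph{necessity}, write $G=F+H$. If $F=C_m$ and $k<\FL{m/2}$, split $C_m$ as $C_q+C_{m-q}$ with $q,m-q\ge k+1$; if $F=P_m$ and $k<\FL{m/2}+1$, split $P_m$ as $C_q+P_{m-q}$ with $q\ge k+1$ and $m-q\ge k-1$. In each case, Lemma~\ref{samedeck} produces $G'\not\cong G$ with $\cD_k(G')=\cD_k(G)$, establishing that $k$ must reach the claimed lower bound tied to $F$. For the bound $k\ge m'+\epsilon'$, apply the third identity $P_{q-1}+P_r\leftrightarrow P_q+P_{r-1}$ (with $q,r\ge k$) to the second-largest component paired with another component of $H$, possibly first invoking a cycle/path merging identity to produce a suitable partner. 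The three values of $\epsilon'\in\{0,1,2\}$ reflect whether a direct swap partner is available in $H$, one auxiliary transformation is needed, or two.

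For \emph{sufficiency}, the goal is to read off the component multiset of $G$ from $\cD_k(G)$. By Observation~\ref{k-1} the $j$-deck is available for all $j\le k$. Since $G$ has maximum degree $2$, each card is a linear forest, and any induced $C_j$ in $G$ is itself a $C_j$-component; hence the number $c_j$ of $C_j$-components equals the multiplicity of $C_j$ in $\cD_j(G)$ for all $j\le k$. Writing $p_i,c_i$ for the path/cycle component counts and $N_j$ for the (known) number of induced $P_j$'s in $G$, we have
\begin{equation*}
N_j \;=\; \sum_{i\ge j}(i-j+1)\,p_i \;+\; \sum_{i\ge j+1} i\,c_i\qquad(1\le j\le k).
\end{equation*}
Under the hypothesis $k\ge m'+\epsilon'$, every non-$F$ component has at most $k$ vertices, so the only remaining unknowns are $\C{V(F)}$ and whether $F$ is a path or a cycle. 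Combined with the total vertex count, this system is triangular in the $p_i$'s and can be solved by descending induction on $i$ to recover all $p_i$ for $i\le k$; the type and size of $F$ are then fixed by the total vertex count and the degree list (reconstructible via Lemma~\ref{count}). The hypothesis $k\ge\FL{m/2}+\epsilon$ is used precisely to rule out the alternate solution in which $F$ is replaced by two smaller components with the same $k$-deck.

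The main obstacle is pinning down the exact value of $\epsilon'$ in each sub-case and proving sharpness there. The statement $\epsilon'\in\{0,1,2\}$ reflects several ways the second-largest component can be perturbed against the rest of $G$ using Claim~\ref{twocomp}, and constructing sharp examples while verifying that no coincidence beyond Claim~\ref{twocomp} arises among max-degree-$2$ graphs with a fixed $k$-deck is the most delicate step. A helpful reduction is to use Lemma~\ref{samedeck} to peel off $F$ from $G$, separating the sufficiency argument into independent statements about $F$ (controlled by $\FL{m/2}+\epsilon$) and about the smaller components (controlled by $m'+\epsilon'$), each of which can then be attacked by direct case analysis on whether the relevant components are paths or cycles.
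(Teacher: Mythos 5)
The paper does not actually prove Theorem~\ref{rho2}: it is quoted from Spinoza--West~\cite{SW} with the details suppressed, to the point that even the definition of $\epsilon'$ is omitted. So there is no internal proof to compare against, and your proposal has to be judged on its own terms. The part of your argument that works is the necessity of $k\ge\FL{m/2}+\epsilon$: the arithmetic checks out ($k\le\FL{m/2}-1$ gives $m\ge 2k+2$, enough to split $C_m$ as $C_q+C_{m-q}$ with $q,m-q\ge k+1$; $k\le\FL{m/2}$ gives $m\ge 2k$, enough to split $P_m$ as $C_q+P_{m-q}$ with $q\ge k+1$, $m-q\ge k-1$), and Lemma~\ref{samedeck} then transfers the coincidence to all of $G$. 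That half is complete and matches the intended use of Claim~\ref{twocomp}.

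The rest is a plan rather than a proof, and the gaps are exactly where the content of the theorem lies. First, you never define $\epsilon'$, so the necessity of $k\ge m'+\epsilon'$ cannot be verified; ``possibly first invoking a cycle/path merging identity'' is not an argument, and without pinning down which of $0,1,2$ occurs in which configuration there is nothing to prove sharp. Second, the sufficiency argument's ``triangular system'' genuinely breaks at the top: taking differences of your identity gives $N_j-N_{j+1}-(j+1)c_{j+1}=\sum_{i\ge j}p_i$, which determines $p_j$ only for $j\le k-2$, because recovering $\sum_{i\ge k}p_i$ would require $N_{k+1}$ and $c_{k+1}$, which are not visible in the $k$-deck. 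The ambiguity between $P_{k-1}+P_r$ and $P_k+P_{r-1}$ (and between $P_{2k}$ and $C_{k+1}+P_{k-1}$) lives precisely at these indices, so asserting that the hypotheses ``rule out the alternate solution'' is assuming the conclusion: you must show that \emph{every} solution of the residual system corresponds to one of the Claim~\ref{twocomp} coincidences, which is the delicate case analysis you defer. Finally, reconstructibility quantifies over all graphs $H$ with $\cD_k(H)=\cD_k(G)$, so you also need to recognize $\Delta(H)\le2$ from the deck (easy for $k\ge4$ since a degree-$3$ vertex forces a non-linear-forest $4$-card, but it should be said, and the small-$k$ cases need separate handling).
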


We omit the technical definition of $\epsilon'$ that incorporates the 
small exceptions to the general formula.  In particular, for a $2$-regular
$n$-vertex graph $G$, the formula for $k$ simplifies to $\max\{\FL{m/2},m'\}$. 
Thus the maximum reconstructibility of the cycle $C_n$ is $\CL{n/2}$, and no
graph in this class has smaller reconstructibility.

That is, every $2$-regular $n$-vertex graph is $\CL{n/2}$-reconstructible,
and in fact graphs with maximum degree $2$ are $\FL{n/2}$-reconstructible.
For maximum degree $3$ or $3$-regular graphs, discussed in
Section~\ref{sec:regular}, much less is known.  It is only known that
$3$-regular graphs are $2$-reconstructible, with no nontrivial upper bounds
known on the maximum reconstructibility.

\section{Trees}\label{sec:trees}

Trees have played a prominent role in the study of reconstruction.  The
original 1957 paper of Kelly~\cite{kel2} showed that trees are
$1$-reconstructible.  Giles~\cite{Giles} showed in 1976 that trees with
at least five vertices are $2$-reconstructible ($P_4$ and $K_{1,3}$ have the
same $2$-deck).  According to N\'ydl~\cite{N01}, the survey of Bondy and
Hemminger~\cite{BH77} reported the existence of a preprint by Giles
proving that sufficiently large trees are $\ell$-reconstructible, but this
was apparently never published and seems to remain open.

N\'ydl gave a lower bound for the threshold $M'_\ell$ such that for
$n\ge M'_\ell$, no two $n$-vertex trees have the same $(n-\ell)$-deck.
Since that paper is somewhat inaccessible (and does not present a full proof),
we give a new short proof here using the third statement of Claim~\ref{twocomp}.
Recall that $S_{a,b,c}$ is the spider with $a+b+c+1$ vertices consisting of
paths of lengths $a$, $b$, and $c$ with a common endpoint.

\begin{theorem}[N\'ydl~\cite{N90}]\label{trees}
The two trees $S_{k-1,k-1,1}$ and $S_{k,k-2,1}$ have the same $k$-deck.
\end{theorem}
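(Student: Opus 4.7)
The plan is to view both trees as the path $P_{2k-1}$ with one extra pendant vertex $p$ attached at an interior vertex $c$: in $T_1 = S_{k-1,k-1,1}$ the vertex $p$ is attached at the middle vertex of $P_{2k-1}$, and in $T_2 = S_{k,k-2,1}$ it is attached one step off center. I will partition $\cD_k(T_i)$ for $i \in \{1,2\}$ into three classes according to whether the card contains $p$ and whether it contains $c$, and show that the multisets of cards match within each class.

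The cards avoiding $p$ are exactly the $k$-cards of $T_i - p = P_{2k-1}$, so this class contributes identically for $T_1$ and $T_2$. A card containing $p$ but not $c$ has $p$ as an isolated vertex, so it has the form $P_1 + F$ where $F$ is a $(k-1)$-card of $T_i - p - c$. Since $T_1 - p - c = P_{k-1} + P_{k-1}$ and $T_2 - p - c = P_k + P_{k-2}$, the third statement of Claim~\ref{twocomp}, applied with deck size $k-1$ and parameters $q=k$, $r=k-1$ (both at least $k-1$), yields $\cD_{k-1}(P_{k-1} + P_{k-1}) = \cD_{k-1}(P_k + P_{k-2})$, matching this class.

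For cards containing both $p$ and $c$, I parameterize each such card by the pair $(\ell, r)$ with $\ell, r \ge 0$ and $\ell + r \le k - 2$: the component of $c$ within the underlying $(k-1)$-card of $P_{2k-1}$ is the subpath extending $\ell$ steps to the left and $r$ steps to the right of $c$, and this range of $(\ell,r)$ is valid for both trees. For each $(\ell,r)$, attaching $p$ at $c$ makes the component of $c$ equal to the same graph $S_{\ell,r,1}$ (interpreting an arm of length $0$ as absent, so the ``core'' reduces to an appropriate path when $\ell = 0$ or $r = 0$). The remaining $k - 2 - \ell - r$ vertices of the card are chosen from the two residual path segments of $P_{2k-1}$, which are $P_{k-\ell-2} + P_{k-r-2}$ in $T_1$ and $P_{k-\ell-1} + P_{k-r-3}$ in $T_2$. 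A second application of the third statement of Claim~\ref{twocomp}, with deck size $m = k - 2 - \ell - r$ and parameters $k - \ell - 1$ and $k - r - 2$ (both at least $m$ for all valid $(\ell,r)$), yields equal residual $m$-decks for each $(\ell,r)$, matching this class summand by summand.

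The main technical work lies in the parametrization of the last class and in the boundary cases: when $\ell + r = k - 2$ the residual deck is trivially a single empty graph, and when the component of $c$ reaches an end of $P_{2k-1}$ one of the residual paths formally has nonpositive length and is interpreted as empty. In every such situation Claim~\ref{twocomp} either still applies directly or the equality is immediate, so summing the three classes gives $\cD_k(T_1) = \cD_k(T_2)$, as required.
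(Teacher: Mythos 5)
Your proof is correct and follows essentially the same route as the paper's: both partition the $k$-deck by whether the pendant vertex and its neighbor (the center) appear in a card, identify the first class with $\cD_k(P_{2k-1})$, reduce the second class to the third statement of Claim~\ref{twocomp} applied to $P_{k-1}+P_{k-1}$ versus $P_k+P_{k-2}$, and handle the last class by fixing the spider $S_{\ell,r,1}$ at the center and applying that same statement to the decks of the residual path pairs. Your bookkeeping of the residual path lengths (excluding the boundary vertices adjacent to the chosen spider) is in fact slightly more careful than the paper's, but the argument is the same.
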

\begin{proof}
Let $G=S_{k-1,k-1,1}$ and $H=S_{k,k-2,1}$; both $G$ and $H$ have $2k$ vertices.
We partition the $k$-decks according to the usage of the non-peripheral leaf,
which we call $v$ in each graph.  The portions of the $k$-deck in which $v$
does not appear are the same, since both equal $\cD_k(P_{2k-1})$.  The portions
in which $v$ appears and its neighbor does not are also the same, since they
are the $(k-1)$-decks (plus an isolated vertex) of $P_{k-1}+P_{k-1}$ and
$P_k+P_{k-2}$, which by Claim~\ref{twocomp} are the same.

In the remainder of the cards in the decks, $v$ appears in a nontrivial
component that is a spider $S_{a,b,1}$.  Consider those cards where this
spider takes $a$ vertices from the first leg and $b$ vertices from the second
leg in the original specification of the host trees.  Since $a,b\le k-2$,
such a spider exists in $H$ if and only if it exists in $G$.

For each choice of $(a,b)$, the cards in this portion of the $k$-deck of $G$
consist of the disjoint union of $S_{a,b,1}$ with cards in the 
$(k-a-b-2)$-deck of $P_{k-1-a}+P_{k-1-b}$.  Similarly, in the $k$-deck of $H$
we have the disjoint union of $S_{a,b,1}$ with cards in the $(k-a-b-2)$-deck
of $P_{k-a}+P_{k-2-b}$.  Since $k-a\ge k-a-b-2$ and $k-b-1\ge k-a-b-2$,
Claim~\ref{twocomp} applies to guarantee that these portions of the two
$k$-decks are the same.
\end{proof}

Thus $M'_\ell\ge2\ell+1$.

\begin{conjecture}[N\'ydl~\cite{N90}]\label{Ntree}
$M'_\ell=2\ell+1$.
\end{conjecture}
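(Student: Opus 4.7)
The plan is to combine the invariants recoverable from the $(n-\ell)$-deck with an induction tailored to trees. Since $n \ge 2\ell+1$ implies $k := n-\ell \ge \ell+1$, Theorem~\ref{Taylor} recovers the degree list of a tree $T$ once $n$ is at least a linear function of $\ell$, and in particular reveals the number of leaves and the multiset of branch-vertex degrees. By Observation~\ref{k-1} every induced subtree of $T$ on at most $k$ vertices is visible as a card, so $\mathrm{diam}(T)$ is available whenever it is at most $k-1$; in particular, trees of diameter at most $k-1$ appear in full as a card and are reconstructible at once. For small $n$ not covered by Theorem~\ref{Taylor}, one can fall back on Corollary~\ref{klist} combined with ad hoc analysis of the finitely many offending cases.

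Next I would split on diameter and attempt a pruning reduction. If $\mathrm{diam}(T) \ge k$, then $T$ contains a path of length at least $k$, so there is a leaf $v$ at the tip of a longest path such that $T-v$ still has most of its structure visible in $\cD_k(T)$. The idea is to extract $\cD_k(T-v)$ from $\cD_k(T)$ using the recovered degree data together with the bijective identities of Claim~\ref{twocomp}, which describe precisely how long pendant paths interact with the deck; induction on $n$ then reduces to smaller trees. Claim~\ref{twocomp} and Lemma~\ref{samedeck} together certify that cards supported on a long pendant path can be accounted for unambiguously, so that cards not containing $v$ can be isolated and reassembled into $\cD_k(T-v)$.

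The main obstacle is controlling the branching structure at internal vertices of degree $\ge 3$. The extremal pair $(S_{k-1,k-1,1},S_{k,k-2,1})$ from Theorem~\ref{trees} shows that on only $2\ell$ vertices a single vertex can slide between two long legs of equal length without being detected by the $k$-deck; the conjecture asserts that one extra vertex suffices to destroy every such ambiguity. Any proof must establish this at each branch vertex separately \emph{and} rule out conspiracies between distinct branch vertices that might cancel each other's deck contributions. I expect this will require a new $k$-deck invariant---perhaps counting, for each small spider $S_{a,b,c}$ with $a+b+c+1 \le k$, the number of induced occurrences whose center coincides with a specified endpoint of a pendant path of $T$---so that the leg-length multiset at every branch vertex is pinned down. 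Combining such local invariants into a global proof, and handling trees with many branch vertices simultaneously, is the real kernel of the conjecture.
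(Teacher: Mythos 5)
The statement you set out to prove is an open conjecture: the paper does not prove it, and only establishes the lower bound $M'_\ell\ge 2\ell+1$ via the pair $\{S_{k-1,k-1,1},S_{k,k-2,1}\}$ of Theorem~\ref{trees}. The paper even records that the far weaker assertion that \emph{some} threshold works for trees (Giles's unpublished preprint) appears to remain open. Your proposal is a research plan rather than a proof --- your final sentence concedes that the ``real kernel'' is unaddressed --- but beyond that admission, the steps you do commit to have concrete failures. First, Theorem~\ref{Taylor} requires $n\ge g(\ell)$ with $g(\ell)\sim \er\ell$, which does not cover the range $2\ell+1\le n<g(\ell)$; as $\ell$ grows this range contains on the order of $(\er-2)\ell$ values of $n$ and unboundedly many trees, so falling back on ``ad hoc analysis of the finitely many offending cases'' is not available. (For trees the degree list may well be recoverable closer to the true threshold, since the number of edges is $n-1$, but that would itself need proof, not a citation of Corollary~\ref{klist}.)

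Second, and more seriously, the pruning step has no support. Claim~\ref{twocomp} and Lemma~\ref{samedeck} concern disjoint unions of paths and cycles; they do not describe how a pendant path attached at a branch vertex contributes to the deck, and they certainly do not let you extract $\cD_k(T-v)$ from $\cD_k(T)$. Recovering the deck of a vertex-deleted subgraph from the deck of the graph is essentially the reconstruction problem itself, not a tool one may invoke; Kelly-type counting (Lemma~\ref{kelly}) yields only the numbers of induced copies of small subgraphs, which is strictly weaker. Moreover, the extremal pair of Theorem~\ref{trees} is precisely a counterexample to the spirit of your reduction: $S_{k-1,k-1,1}$ and $S_{k,k-2,1}$ have the same $k$-deck, the same degree list, and the same longest path, yet differ in where a single vertex attaches. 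Any new invariant of the kind you gesture at (counting spiders $S_{a,b,c}$ with $a+b+c+1\le k$ anchored at the tip of a pendant path) must be shown to separate all pairs of trees on $2\ell+1$ vertices while provably failing on $2\ell$ vertices, and your sketch does not attempt this. The conjecture remains open, and your proposal does not close it.
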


Note that Conjecture~\ref{Ntree} is not quite the same as $M_\ell(\bT)=2\ell+1$.
N\'ydl required only that no two $n$-vertex trees have the same $(n-\ell)$-deck,
but for $\ell$-reconstructibility there is also the matter of showing that
all possible reconstructions from the deck are trees; that is, showing that
the family of trees is $\ell$-recognizable when $n\ge 2\ell+1$.

An $n$-vertex graph is a tree if and only if it has $n-1$ edges and is 
connected (or has $n-1$ edges and no cycles).  From the $2$-deck, we know the
number of edges.  If Conjecture~\ref{connconj} is true in the stronger form
replacing $2\ell+2$ with $2\ell+1$ for $\ell\ge3$, which Theorem~\ref{conn3}
proves for $\ell=3$, then combined with Conjecture~\ref{Ntree} it would 
imply $M_\ell(\bT)=2\ell+1$.  Indeed, the full strength of 
Conjecture~\ref{connconj} probably is not needed; we only need to know from
the $\FL{n/2}$-deck whether an $n$-vertex graph with $n-1$ edges has a cycle
of length at least $n/2$.

\section{Disconnected and Complete Multipartite Graphs}\label{sec:rpartite}

One of the earliest results on reconstruction, by Kelly~\cite{kel2},
is that disconnected graphs are $1$-reconstructible.  Manvel~\cite{Manvel}
discussed the $\ell$-reconstructibility of disconnected graphs.
We expand on this discussion to obtain a sharp threshold on the size
of components that makes $\ell$-reconstructibility easy.

We first prove what might be called the ``negative'' result.

\begin{proposition}
If graphs with at least $\ell+2$ vertices consisting of a connected graph
and $\ell-1$ isolated vertices are $\ell$-reconstructible, then the original
Reconstruction Conjecture holds.
\end{proposition}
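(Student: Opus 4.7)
The plan is to assume the hypothesis and deduce the original Reconstruction Conjecture by padding with isolated vertices. Since disconnected graphs on at least three vertices are already known to be $1$-reconstructible (Kelly~\cite{kel2}), it suffices to show that every connected graph $G$ on $n\ge 3$ vertices is $1$-reconstructible. So I would fix such a $G$, suppose $H$ satisfies $\cD_{n-1}(H)=\cD_{n-1}(G)$, and aim to conclude $H\cong G$. Iterating Observation~\ref{k-1} gives $\cD_i(G)=\cD_i(H)$ for every $i\le n-1$, which is the only consequence of the deck-hypothesis on $G$ and $H$ that will be used.

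Next I would form the padded graphs $G'=G+(\ell-1)K_1$ and $H'=H+(\ell-1)K_1$. Each has $n+\ell-1\ge\ell+2$ vertices, and $G'$ consists of the connected graph $G$ together with $\ell-1$ isolated vertices, so $G'$ lies in the class covered by the hypothesis. If I can show $\cD_{n-1}(G')=\cD_{n-1}(H')$, then $\ell$-reconstructibility of $G'$ forces $H'\cong G'$. Comparing component multisets then finishes the argument: $G'$ has $\ell-1$ copies of $K_1$ and one larger component isomorphic to $G$ (which is not $K_1$ since $n\ge 3$), whereas the components of $H'$ are those of $H$ together with $\ell-1$ extra copies of $K_1$, so matching multisets forces $H$ to have no isolated vertex and a unique nontrivial component isomorphic to $G$.

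The core computation is the equality $\cD_{n-1}(G')=\cD_{n-1}(H')$, which I would obtain by decomposing each $(n-1)$-card of $G'$ according to the number $j$ of deleted vertices taken from the isolated part, where $\max(0,\ell-n)\le j\le\ell-1$. Deleting $j$ isolated vertices and $\ell-j$ vertices of $G$ produces a card of the form $F+(\ell-1-j)K_1$ with $F\in\cD_{n-\ell+j}(G)$, and for each such $F$ there are $\binom{\ell-1}{j}$ equivalent choices of which isolated vertices to retain. Hence $\cD_{n-1}(G')$ is a function purely of the tuple $\bigl(\cD_{n-\ell+j}(G)\bigr)_{j}$; the analogous decomposition for $H'$ uses the same tuple by the first paragraph, which yields the desired equality.

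The argument is essentially a padding reduction, so I do not expect a genuine obstacle. The one point demanding care is the multiplicity bookkeeping in the decomposition above together with the range $\max(0,\ell-n)\le j\le\ell-1$, which ensures that every index $i=n-\ell+j$ lies in $[0,n-1]$ so that Observation~\ref{k-1} actually supplies the equality $\cD_i(G)=\cD_i(H)$ we need.
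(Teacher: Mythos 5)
Your proposal is correct and takes essentially the same route as the paper: both pad $G$ to $G'=G+(\ell-1)K_1$, observe that $\cD_{n-1}(G')$ is computed from the decks $\cD_{n-i}(G)$ (each card $C+(i-1)K_1$ appearing with multiplicity $\binom{\ell-1}{\ell-i}$), and then invoke the assumed $\ell$-reconstructibility of $G'$. Your write-up merely makes explicit two points the paper leaves implicit, namely the extraction of $H\cong G$ from $H'\cong G'$ by comparing component multisets and the index range when $\ell>n$.
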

\begin{proof}
We need to prove $1$-reconstructibility when $n\ge3$.  Kelly~\cite{kel2} proved
this for disconnected graphs, so consider a connected $n$-vertex graph $G$.
We are given $\cD_{n-1}(G)$.  By Observation~\ref{k-1}, we also know 
$\cD_{n-i}(G)$ for $2\le i\le \ell$.  Let $G'=G+(\ell-1)K_1$; note that
$G'$ has at least $\ell+2$ vertices.  For $2\le i\le \ell$, let $\cD'_i$
consist of $\CH{\ell-1}{\ell-i}$ copies of $C+(i-1)K_1$ for each occurrence of
$C$ in $\cD_{n-i}(G)$.  Note that
$$
\cD_{n-\ell}(G')=\cD_{n-1}(G)\cup (\cD'_2\cup\cdots\cup\cD'_\ell).
$$
Thus if $G+(\ell-1)K_1$ is $\ell$-reconstructible, then we have determined
$G$ from $\cD_{n-1}(G)$.
\end{proof}

Now consider $n$-vertex graphs whose components all have at most $n-\ell$
vertices.  Manvel~\cite{Manvel} observed that IF it is known that $G$ is a
such a graph, then $G$ is $\ell$-reconstructible.  In fact, we show that such
graphs can be recognized from the $(n-\ell)$-deck.  With Manvel's observation,
this implies that these graphs are in fact $\ell$-reconstructible.

The argument generalizes a proof of the $1$-reconstructibility of disconnected
graphs, involving a counting argument for ordinary graph reconstruction
that was applied by Bondy and Hemminger~\cite{BH78} and originated with
Greenwell and Hemminger~\cite{GH}.  A similar argument to that given here
appears in the ``Main Lemma'' of N\'ydl~\cite{N01}.

We need the basic idea of Kelly's Lemma~\cite{kel2}, which counts the copies of
a given graph $F$ that appear in $G$ by dividing the total number of
appearances of $F$ in the deck by the number of times each copy appears.  We
use an analogue for induced subgraphs and generalize to the $(n-\ell)$-deck.

\begin{lemma}\label{kelly}
If $G$ is an $n$-vertex graph, and $F$ is a graph with at most $n-\ell$
vertices, then the number $s_F(G)$ of occurrences of $F$ as an induced subgraph
of $G$ is $\ell$-reconstructible.
\end{lemma}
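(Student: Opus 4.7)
The plan is to prove Lemma~\ref{kelly} by a Kelly-style double counting: for each $(n-\ell)$-vertex subset $S\subseteq V(G)$, the induced subgraph $G[S]$ contains $s_F(G[S])$ copies of $F$, and by summing over $S$ we should count each copy of $F$ in $G$ the same number of times, a number that is combinatorially determined and positive.

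More concretely, write $k=n-\ell$ and $f=|V(F)|$, so by hypothesis $f\le k$. First I would verify the identity
\[
\sum_{\substack{S\subseteq V(G)\\|S|=k}} s_F(G[S]) \;=\; \CH{n-f}{k-f}\cdot s_F(G).
\]
To see this, count pairs $(F',S)$ where $F'$ is an induced copy of $F$ in $G$ and $S$ is a $k$-subset of $V(G)$ containing $V(F')$. Counting by $S$ first gives the left side, since each induced copy of $F$ in $G[S]$ is also an induced copy of $F$ in $G$ whose vertex set lies in $S$. Counting by $F'$ first gives the right side, since once $V(F')$ is fixed we must choose the remaining $k-f$ vertices of $S$ from the $n-f$ vertices outside $V(F')$.

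Next I would observe that the sum on the left is determined by $\cD_k(G)$: it equals $\sum_{C\in \cD_k(G)} s_F(C)$, where the sum is over the multiset $\cD_k(G)$, and $s_F(C)$ depends only on the isomorphism class of the card $C$. Finally, since $0\le k-f\le n-f$, the binomial coefficient $\CH{n-f}{k-f}$ is a positive integer, so we may solve for
\[
s_F(G) \;=\; \frac{1}{\CH{n-f}{k-f}}\sum_{C\in \cD_k(G)} s_F(C),
\]
recovering $s_F(G)$ from $\cD_{n-\ell}(G)$, as required.

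There is no substantive obstacle: the only thing to check beyond the double count is nonvanishing of the binomial coefficient, which is exactly the hypothesis $f\le n-\ell$. The lemma is essentially the direct $\ell$-deck analogue of Kelly's original counting argument, with the single card $G-v$ replaced by the multiset of $(n-\ell)$-vertex induced subgraphs.
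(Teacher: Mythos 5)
Your argument is correct and is essentially identical to the paper's proof: both count, for each induced copy of $F$, the number of $(n-\ell)$-cards containing it, and your $\CH{n-f}{k-f}$ equals the paper's $\CH{n-p}{\ell}$ since $k-f=(n-f)-\ell$. The only difference is that you spell out the double count explicitly, which the paper leaves implicit.
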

\begin{proof}
Let $p=|V(F)|$.  Each induced copy of $F$ appears in $\CH{n-p}\ell$ cards
in $\cD_{n-\ell}(G)$.  Letting $t$ be the total count of all appearances of $F$
as an induced subgraph in all cards in $\cD_{n-\ell}(G)$, we have
$s_F(G)=t\big/\CH{n-p}\ell$.
\end{proof}

\begin{theorem}\label{disconn}
If every connected subgraph of $G$ has at most $n-\ell$ vertices, then
$G$ is $\ell$-reconstructible.
\end{theorem}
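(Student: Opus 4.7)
The plan is to carry out the two-step scheme announced just before the theorem: (i) if $G$ is known to lie in the family $\cF$ of $n$-vertex graphs whose components all have at most $n-\ell$ vertices, recover the multiset of components of $G$ from $\cD_{n-\ell}(G)$ (Manvel's observation, formalized via Kelly-style counting); and (ii) show that $\cF$ is $\ell$-recognizable, so that any $H$ with the same $(n-\ell)$-deck as $G$ must also lie in $\cF$, whence (i) forces $H\cong G$.

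For (i), I would invoke Lemma~\ref{kelly} to recover $s_F(G)$ for every connected $F$ with $|V(F)|\le n-\ell$. Writing $m_H(G)$ for the multiplicity of the connected graph $H$ as a component of $G$ and using that each connected induced subgraph of $G$ is confined to a single component (of size at most $n-\ell$ by hypothesis),
$$s_F(G)=\sum_{H\text{ conn},\ |V(F)|\le |V(H)|\le n-\ell}m_H(G)\,s_F(H).$$
Ordering connected graphs by $|V(\cdot)|$ (with ties broken arbitrarily), the system is lower triangular with $s_F(F)=1$ on the diagonal, so back-substitution from $|V(F)|=n-\ell$ downward determines every $m_H(G)$, and hence the graph $G$ itself.

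For (ii), suppose instead that $H$ shares the $(n-\ell)$-deck of $G$ but has some component $C$ with $|V(C)|>n-\ell$, aiming for a contradiction. Sum the identity $s_F(G)=s_F(H)$ over all connected $F$ with $|V(F)|=n-\ell$: the common total $a$ is the number of connected induced subgraphs of size $n-\ell$ in either graph. In $G$ every such subgraph must be a full component, so $a(n-\ell)\le n$. On the $H$-side, each component of size exactly $n-\ell$ contributes $1$ to $a$ and, using the classical bound that a connected graph on $q$ vertices has at least $q-p+1$ connected induced subgraphs of size $p$ (a short induction on $q-p$, repeatedly deleting a leaf of a spanning tree), each component of size $q>n-\ell$ contributes at least $q-(n-\ell)+1\ge 2$. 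Hence $a\ge 2$, contradicting $a<2$ as soon as $n>2\ell$. I expect this recognition step (ii) to be the main obstacle: one needs just enough information in $\cD_{n-\ell}(G)$ to exclude large components of $H$, and the counting above shows the natural range is $n>2\ell$; for $n=2\ell$ the family $\cF$ is genuinely not $\ell$-recognizable, as witnessed by the pair $\{2K_2,\,P_3+K_1\}$ at $n=4$, $\ell=2$, so any statement covering smaller $n$ would require additional input beyond this argument.
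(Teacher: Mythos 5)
Your step (i) is essentially the paper's own argument: the paper derives the same identity $s_F(G)=\sum_{H}s_F(H)c_H(G)$ (summing over components $H$) and solves it by induction on the ``depth'' of $F$ (the length of a longest chain of connected induced subgraphs starting at $F$) rather than on $|V(F)|$, but both orderings make the same linear system unitriangular with $s_F(F)=1$ on the diagonal, so this is only a cosmetic difference. The real divergence is your step (ii). The paper's preamble promises that membership in the family is recognizable from the $(n-\ell)$-deck, but the displayed proof never carries this out; as written it only shows that the deck determines $G$ once one already knows all components are small, which does not by itself exclude a graph $H$ with an oversized component sharing the deck. Your counting argument supplies exactly this missing step: the number $a$ of connected $(n-\ell)$-vertex cards satisfies $a\le n/(n-\ell)<2$ for $G$ (each such card must be a full component, and these are disjoint), while any $H$ with a component on $q>n-\ell$ vertices has $a\ge q-(n-\ell)+1\ge 2$; your lower bound of $q-p+1$ connected induced $p$-vertex subgraphs in a connected $q$-vertex graph is correct (delete a non-cut vertex and induct, adding one connected $p$-set through the deleted vertex). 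You are also right that this forces the restriction $n>2\ell$, which the paper's statement omits: $2K_2$ satisfies the hypothesis with $n=4$ and $\ell=2$ yet has the same $2$-deck as $P_3+K_1$, so the theorem as literally stated fails at $n=2\ell$. In short, your proof follows the paper's route for the reconstruction step, and it adds both the recognition argument and the sharp range of validity that the paper's proof leaves implicit (and, at the boundary, gets wrong).
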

\begin{proof}
It suffices to determine, for every connected graph $F$, the number $c_F(G)$ of
components of such a graph $G$ that are isomorphic to $F$.

Let an {\it induced chain} of length $r$ be a list $\VEC F0r$ of connected
induced subgraphs of $G$ such that $F_i$ an induced subgraph of $F_{i+1}$ for
$0\le i<r$.  For any connected induced subgraph $F$ of $G$, let the {\it depth}
of $F$ be the maximum $r$ such that $F$ is the first subgraph in an induced
chain of length $r$.

Since every connected subgraph of $G$ has at most $n-\ell$ vertices,
all connected induced subgraphs of $G$ appear in the deck.  Since we know
all these subgraphs, we can determine all the induced chains, and hence we
know the depth of each connected induced subgraph.

If $F$ has depth $0$, then every induced copy of $F$ in $G$ is a component,
so $c_F(G)=s_F(G)$.  For larger depth, group the induced copies of $F$ by the
unique component of $G$ containing that copy.  Summing over all components of
$G$, we obtain
\begin{equation*}
s_F(G)=\sum_{H} s_F(H)c_H(G).\tag{1}
\end{equation*}

When $s_F(H)\ne0$ and $F\ne H$, every induced chain starting at $H$ can be
augmented by adding $F$ at the beginning, so $H$ has smaller depth than $F$.
Using Lemma~\ref{kelly} to compute $s_F(G)$ and applying the induction
hypothesis to compute values of $c_H(G)$, we now know every quantity in
(1) other than $c_F(G)$ and can solve for $c_F(G)$.
\end{proof}

Here we used the property that for the family $\cF$ of connected graphs, every
member of $\cF$ contained in $G$ has at most $n-\ell$ vertices and belongs to a
unique maximal member of $\cF$ contained in $G$, namely a component.  The
argument applies also to other families $\cF$ that have this property.

For a very special class of disconnected graphs, much stronger results about
reconstructibility are known.  Note first a simple observation, using the
fact that the cards in the $k$-deck determine their complements.

\begin{observation}
A graph $G$ is determined by its $k$-deck if and only if its complement
$\Gb$ is determined by its $k$-deck.
\end{observation}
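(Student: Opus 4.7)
The plan is to exploit the fact that complementation commutes with the operation of extracting induced subgraphs on a fixed vertex set: for every $S\esub V(G)$, we have $\overline{G[S]}=\Gb[S]$. First I would note that this gives a natural correspondence between the cards of $\cD_k(G)$ and those of $\cD_k(\Gb)$: each $k$-vertex induced subgraph $C$ of $G$ corresponds to the $k$-vertex induced subgraph $\Kb_{|V(C)|}\setminus E(C)$ on the same vertex set in $\Gb$, namely $\overline{C}$. Thus $\cD_k(\Gb)$ is literally the multiset obtained by complementing each card in $\cD_k(G)$.

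Next I would observe that complementation is a well-defined involution on the set of isomorphism classes of $k$-vertex graphs, so the operation ``replace every card by its complement'' is a bijection on multisets of $k$-vertex isomorphism classes. Consequently, for any two graphs $G$ and $H$, we have $\cD_k(G)=\cD_k(H)$ if and only if $\cD_k(\Gb)=\cD_k(\Hb)$.

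The conclusion now follows in one line. Suppose $G$ is determined by its $k$-deck, and let $H'$ be any graph with $\cD_k(H')=\cD_k(\Gb)$. By the bijection, $\cD_k(\overline{H'})=\cD_k(G)$, hence $\overline{H'}\cong G$, and therefore $H'\cong\Gb$. The reverse direction is identical because $\overline{\Gb}=G$. There is no real obstacle here; the only thing to take care of is the bookkeeping that complementation respects isomorphism, so the bijection on multisets is well defined.
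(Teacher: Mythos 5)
Your proof is correct and matches the paper's intended argument: the paper justifies this observation in a single clause (``the cards in the $k$-deck determine their complements''), which is exactly the correspondence $\cD_k(\Gb)=\{\overline{C}:C\in\cD_k(G)\}$ that you elaborate. The only nitpick is the stray expression $\Kb_{|V(C)|}\setminus E(C)$ (you mean $K_{|V(C)|}\setminus E(C)$, i.e.\ $\overline{C}$), which does not affect the argument.
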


Hence when we discuss $\ell$-reconstructibility of graphs whose components are
complete graphs, we are also discussing $\ell$-reconstructibility of complete
multipartite graphs.

Let $G$ be a disjoint union of complete graphs.  Membership in this family is
determined by the $3$-deck, since a graph is a disjoint union of complete
graphs if and only if it does not have $P_3$ as an induced subgraph.  When the
largest component has at most $k$ vertices, Theorem~\ref{disconn} implies that
the graph is determined by its $k$-deck (Spinoza and West~\cite{SW} had 
observed that the $(k+1)$-deck suffices).

More interesting is the situation when we bound the number of parts rather
than the size of the parts.

\begin{theorem}[Spinoza--West~\cite{SW}]\label{Trpart}
Every complete $r$-partite graph $G$ is determined by its $(r+1)$-deck
(as are disjoint unions of $r$ complete graphs).
\end{theorem}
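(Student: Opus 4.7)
The plan is to pass to complements and prove the parenthetical assertion: any disjoint union $H = K_{t_1}+\cdots+K_{t_r}$ of $r$ complete graphs is determined by $\cD_{r+1}(H)$. This reduction is legitimate because $\overline{G[S]}=\overline{G}[S]$ for every vertex subset $S$, so complementing each card gives a bijection between $\cD_{r+1}(G)$ and $\cD_{r+1}(\overline{G})$; the complement observation immediately preceding the theorem then shows the two formulations are equivalent.

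I would carry out the reconstruction of $H$ in two stages, identifying first the family and then the parameters. For the family, any graph $H'$ with the same $(r+1)$-deck as $H$ must itself be a disjoint union of exactly $r$ cliques. When $r\ge 2$, the $(r+1)$-deck determines the $3$-deck by Observation~\ref{k-1}, which in turn records whether $P_3$ occurs as an induced subgraph; since disjoint unions of cliques are precisely the $P_3$-free graphs, $H'$ has this form. (For $r=1$ we have $H=K_n$, and the $2$-deck already determines both $n$ and the edge count $\binom{n}{2}$, forcing $H' = K_n$.) To pin down the number of components, observe that every $(r+1)$-card of $H$ arises by distributing $r+1$ vertices among the $r$ cliques, so by pigeonhole each card has at most $r$ components, while taking one vertex from each clique plus one extra exhibits a card with exactly $r$ components; thus $r$ equals the maximum number of components appearing in any card. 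If $H'$ is a disjoint union of $q$ cliques, the same reasoning gives $\min(q,r+1)=r$, forcing $q=r$.

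With $H'$ known to be a disjoint union of $r$ cliques, it remains to recover the multiset $\{t_1,\ldots,t_r\}$ of component sizes. By Lemma~\ref{kelly}, from $\cD_{r+1}(H)$ one may compute $s_{K_j}(H)=\sum_{i=1}^r\binom{t_i}{j}$ for every $j$ with $1\le j\le r+1$. Since $\binom{x}{j}$ is a polynomial of degree $j$ in $x$ with known leading coefficient, the values $s_{K_j}(H)$ for $j=1,\ldots,r$ determine the power sums $p_j=\sum_i t_i^j$ by inverting a triangular linear system, and Newton's identities then yield the elementary symmetric functions $e_1,\ldots,e_r$, which determine the multiset $\{t_1,\ldots,t_r\}$ uniquely. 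The main delicate point of the argument is the pigeonhole-based component count, which forces any competitor graph to share $H$'s number of parts and thereby reduces everything else to a routine algebraic extraction from clique-subgraph counts.
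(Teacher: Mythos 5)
Your argument is correct and is essentially the paper's proof transplanted to the complement: recognize the family from the $3$-deck, pin down the number $r$ of parts, and then recover the multiset of part sizes as the roots of a degree-$r$ polynomial whose coefficients are extracted via Kelly-type counting from the deck. The only difference is cosmetic: the paper reads off the elementary symmetric functions $e_j(q_1,\ldots,q_r)$ directly as the numbers of complete cards in $\cD_j(G)$ for $j\le r$, while you reach them indirectly through the power sums $\sum_i t_i^j$ and Newton's identities (and you certify the number of parts by a pigeonhole count of components in cards rather than by the largest complete card).
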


The proof of Theorem~\ref{Trpart} is actually algebraic.  For $r\ge2$,
the $3$-deck tells us that $G$ is complete multipartite, and the absence
of $K_{r+1}$ in the deck makes it $r$-partite.  Letting the part-sizes be
$\VEC q1r$, form the polynomial $\PE i1r (x-q_i)$.  The coefficient of
$(-1)^jx^{r-j}$ in the expansion is the number of complete cards in $\cD_i(G)$.
Since $\cD_i(G)$ is determined by $\cD_{r+1}(G)$, we know the polynomial and
can find the roots $\VEC q1r$.

Theorem~\ref{Trpart} is sharp for $r\le2$ and all $n$.  It is immediate that
complete bipartite graphs are not $2$-deck reconstructible, since they are not
determined by their numbers of edges and vertices (the $3$-vertex cards are
not given.  For complete tripartite graphs, the $3$-deck determines that the
graph is a complete multipartite graph, but the following example shows that
that is not sufficient.

\begin{example}[Spinoza--West~\cite{SW}]\label{Erpart}
The complete multipartite graphs $K_{7,4,3}$ and $K_{6,6,1,1}$ have the
same $3$-deck.  It consists of $84$ copies of $K_3$, $240$ copies of 
$P_3$, and $40$ copies of $\Kb_3$.
\end{example}

We expect that diligence will yield more general examples.

\begin{question}
Is it true for all $r\in\NN$ with $r\ge3$ that there are a complete $r$-partite
graph and a complete $(r+1)$-partite graph having the same $r$-deck?
\end{question}

Finally, N\'ydl considered the reconstructibility of disjoint unions of
complete graphs where neither the number of components nor the sizes of the
components are restricted.  We can still recognize from the $3$-deck that our
graph is in this class.

\begin{theorem}[N\'ydl~\cite{N85}]
Let $G$ be an $n$-vertex graph that is a disjoint union of complete graphs.
If $n<k\ln(k/2)$, then $G$ is determined by its $k$-deck.  If
$n=(k+1)2^{k-1}$, then there is such a graph $G$ that is not determined
by its $k$-deck.
\end{theorem}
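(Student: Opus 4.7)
My plan is to encode both directions as a question about integer polynomials divisible by $(y-1)^{k+1}$. Writing $G=\sum_i K_{a_i}$ and setting $p_j(G):=\sum_i \binom{a_i}{j}$, which equals the number of induced copies of $K_j$, Lemma~\ref{kelly} shows that $\cD_k(G)$ determines $p_j(G)$ for every $j\le k$. Conversely, every $k$-card is itself a disjoint union of cliques, and the multiplicity of each shape $\lambda\vdash k$ can be written by inclusion--exclusion (subtracting contributions in which two chosen cliques share a component of $G$) as a polynomial in $p_1,\dots,p_k$, using that $\prod_j \binom{a}{\lambda_j}$ is a $\mathbb{Z}$-linear combination of $\binom{a}{m}$ for $m\le\sum_j\lambda_j\le k$. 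Thus the $k$-deck of such a graph is equivalent to $\sum_i (1+x)^{a_i}$ modulo $x^{k+1}$, or, after substituting $y=1+x$, to the coset of $\sum_i y^{a_i}$ modulo $(y-1)^{k+1}$. Since $\cD_k$ counts induced $P_3$'s (and these vanish for a disjoint union of cliques), recognition of the class is free, so I may assume any other graph $G'$ with $\cD_k(G')=\cD_k(G)$ is also a disjoint union of cliques.

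For the negative direction at $n=(k+1)2^{k-1}$, I will take the identity $(y-1)^{k+1}=\sum_t (-1)^{k+1-t}\binom{k+1}{t}y^t$ and split its coefficients by sign: $\binom{k+1}{t}$ copies of $K_t$ go into $G_+$ for each $t$ with $(-1)^{k+1-t}>0$, and into $G_-$ for the opposite sign (any formal $K_0$ contribution is discarded). By construction $\sum y^{a_i}(G_+)-\sum y^{a_i}(G_-)=\pm(y-1)^{k+1}$ is divisible by $(y-1)^{k+1}$, so the two graphs share a $k$-deck. To verify the vertex counts, I will apply $(k+1)y(1+y)^k=\sum_t t\binom{k+1}{t}y^t$ and extract the even- and odd-index partial sums via $\tfrac{f(1)\pm f(-1)}{2}$, which yields exactly $(k+1)2^{k-1}$ vertices in each graph.

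For the positive direction, I will suppose for contradiction that two distinct disjoint unions of cliques $G,G'$ on $n<k\ln(k/2)$ vertices share a $k$-deck. Then $Q(y):=\sum_i y^{a_i}-\sum_j y^{a'_j}\in\mathbb{Z}[y]$ is a nonzero polynomial divisible by $(y-1)^{k+1}$. By Descartes' Rule of Signs, since $y=1$ is a positive real root of $Q$ of multiplicity at least $k+1$, the coefficient sequence of $Q$ has at least $k+1$ sign changes and hence at least $k+2$ nonzero terms; at most one of them sits at position $0$, leaving at least $k+1$ at distinct positive positions. Writing $q_t=c_t-c'_t$ where $c_t,c'_t$ are the multiplicities of $K_t$ in $G,G'$, the triangle inequality gives
$$2n=\sum_{t\ge 1} t(c_t+c'_t)\ge \sum_{t\ge 1} t|q_t|\ge 1+2+\cdots+(k+1)=\binom{k+2}{2},$$
so $n\ge (k+1)(k+2)/4$. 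One checks $(k+1)(k+2)/4\ge k\ln(k/2)$ for every $k\ge 1$, contradicting $n<k\ln(k/2)$.

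The step I expect to be most subtle is the equivalence asserted in the first paragraph, namely that each card multiplicity in $\cD_k(G)$ is a polynomial in $p_1,\dots,p_k$; this is a symmetric-function identity that is routine in principle but clerically dense, since one must bookkeep all ways that two or more chosen cliques can land in the same component of $G$. Once that equivalence is secured, both directions reduce to the clean polynomial problem, and the Descartes bound is in fact slightly stronger than the advertised $k\ln(k/2)$, so N\'ydl's threshold is recovered with room to spare.
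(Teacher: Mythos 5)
The paper does not actually prove this statement---it is quoted from N\'ydl's 1985 paper as part of the survey---so there is no in-text argument to compare against. Your encoding of the problem is the right one and is surely close in spirit to N\'ydl's: for $G=\sum_i K_{a_i}$ the $k$-deck is equivalent to the data $p_1,\dots,p_k$ with $p_j=\sum_i\binom{a_i}{j}$ (Lemma~\ref{kelly} gives one direction; the inclusion--exclusion over ways that several chosen cliques fall into one component gives the other, and recognition of the class comes from counting induced copies of $P_3$). Your negative construction, splitting the coefficients of $(y-1)^{k+1}$ by sign, is exactly the standard one (it is the clique analogue of Manvel's star example quoted in Section~\ref{sec:deglist}), and the vertex count $(k+1)2^{k-1}$ checks out. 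Your positive direction via Descartes' rule is correct and in fact proves more than the stated theorem: any two distinct disjoint unions of cliques with the same $k$-deck must satisfy $2n\ge\binom{k+2}{2}$, i.e.\ $n\ge(k+1)(k+2)/4$, which dominates $k\ln(k/2)$ for all $k\ge1$. Since the paper explicitly laments that N\'ydl's two bounds ``are quite far apart,'' a quadratic lower bound on counterexamples is a genuine (if still modest) improvement and worth stating as such rather than burying it inside a proof of the weaker claim.

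One inaccuracy should be repaired, though it does not affect the conclusion. The $k$-deck does \emph{not} determine the coset of $\sum_i y^{a_i}$ modulo $(y-1)^{k+1}$: writing $\sum_i y^{a_i}=\sum_{j\ge0}p_j(y-1)^j$, the coefficient $p_0$ is the number of components, which is not reconstructible (already $K_2$ and $2K_1$ have the same $1$-deck). What the deck determines is $p_1,\dots,p_k$, i.e.\ the coset of $\sum_i(y^{a_i}-1)$, so for two graphs with equal $k$-decks you only get that $R(y):=Q(y)-Q(1)$ is divisible by $(y-1)^{k+1}$, not $Q$ itself. You should therefore apply Descartes to $R$ rather than to $Q$; since $R$ and $Q$ agree in every coefficient of positive index and differ only in the constant term, your subsequent step (``at most one of the nonzero coefficients sits at position $0$, leaving at least $k+1$ at distinct positive positions'') goes through verbatim, and the bound $\sum_{t\ge1}t|q_t|\ge\binom{k+2}{2}$ survives. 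With that correction, and with the symmetric-function bookkeeping in the first paragraph written out (it is routine: each product $\prod_j\binom{a}{\lambda_j}$ with all $\lambda_j\ge1$ expands over $\binom{a}{m}$ for $1\le m\le\sum_j\lambda_j\le k$ with no constant term), the proof is complete.
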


These bounds are quite far apart, and neither says much about the threshold
number of vertices for $\ell$-reconstructibility of disjoint unions of 
complete graphs (or, equivalently, complete multipartite graphs).
The extremal problem is the following.

\begin{problem}
Determine the maximum $n$ such that every $n$-vertex complete multipartite
graph is determined by its $k$-deck.
\end{problem}

\section{Regular graphs}\label{sec:regular}

As noted above, $1$-reconstructibility of disconnected graphs is easy, but
$2$-reconstructibility of all disconnected graphs implies the Reconstruction
Conjecture.

Similarly, $1$-reconstructibility of regular graphs is easy using the 
$1$-reconstructibility of the degree list.  Motivated by this, at a meeting in
Sanya in 2019 Bojan Mohar asked whether regular graphs are $2$-reconstructible.

Since $1$-regular graphs are determine by their degree lists, they are
determined by their $3$-decks and hence are $(n-3)$-reconstructible.  The
results of Spinoza and West~\cite{SW} described in Section~\ref{sec:maxdeg2}
imply that $2$-regular graphs are $\FL{n/2}$-reconstructible.  Both 
thresholds are sharp.

For $r\ge 3$, $2$-reconstructibility of $r$-regular graphs is not immediate,
even though the degree list is $2$-reconstructible, because we must determine
which of the deficient vertices in a card is adjacent to which of the two
missing vertices.  Nevertheless, the question has been answered for $r=3$.

\begin{theorem}[Kostochka--Nahvi--West--Zirlin~\cite{KNWZ2}]\label{3reg2rec}
Every $3$-regular graph is $2$-reconstructible.
\end{theorem}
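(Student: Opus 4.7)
The plan is to prove Theorem~\ref{3reg2rec} in two stages. First, I would establish that $3$-regularity is $2$-recognizable: by Theorem~\ref{Chernyak}, the degree list of an $n$-vertex graph is $2$-reconstructible when $n\ge 6$, so any graph sharing $\cD_{n-2}(G)$ with a $3$-regular $G$ is itself $3$-regular. Since $3$-regular graphs have even order, the only small case is $n=4$, where $K_4$ is the unique $3$-regular graph; so I may assume $n\ge 6$ is even.

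For uniqueness within the family of $3$-regular graphs, by Lemma~\ref{kelly} the $(n-2)$-deck determines $s_F(G)$ for every graph $F$ with $|V(F)|\le n-2$, so I know the numbers of induced $P_3$, $K_3$, $P_4$, $C_4$, $K_{1,3}$, $C_5$, $K_4-e$, and every other small configuration. Each card $C=G-\{u,v\}$ carries refined local information: its edge count distinguishes $uv\in E(G)$ (giving $|E(C)|=\frac{3n}{2}-5$) from $uv\notin E(G)$ (giving $\frac{3n}{2}-6$); and the profile of \emph{deficient} vertices (those of degree less than $3$) determines the number $k$ of common neighbors of $u$ and $v$, because a common neighbor drops to degree $1$ while any other external neighbor drops to degree $2$. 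Thus the bivariate distribution over unordered pairs $\{u,v\}$ of adjacency and common-neighbor count is determined.

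The core of the proof is a local attachment lemma: for each card $C$ in $\cD_{n-2}(G)$, the reattachment of two new vertices to the deficient set of $C$ so as to produce a $3$-regular graph consistent with the globally known counts $s_F$ is unique up to isomorphisms of $C$. Granted this, if $G$ and $H$ are $3$-regular $n$-vertex graphs with $\cD_{n-2}(G)=\cD_{n-2}(H)$, then fixing any common card $C$ and applying the lemma forces $G\cong H$.

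The hard part will be this attachment lemma, because in general several non-isomorphic wirings of the deleted pair to the deficient vertices of $C$ can each produce a $3$-regular supergraph, and eliminating all but one requires invoking the global counts $s_F$ for $F$ passing through the deficient vertices --- short cycles, $K_4-e$, pairs of short cycles sharing a vertex, and similar small configurations. A systematic case analysis on the pair type (adjacency together with common-neighbor count $k$) should handle the generic situation, with small-$n$ base cases (say $n\le 8$) verified by direct enumeration of cubic graphs, and the most symmetric subcases (where $C$ has automorphisms swapping candidate partitions of the deficient set into ``neighbors of $u$'' and ``neighbors of $v$'') treated separately, possibly by comparing counts of longer induced cycles or paths that are still within the $(n-2)$-vertex threshold.
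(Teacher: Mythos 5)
Your proposal sets up a reasonable framework, but it does not contain a proof: the entire content of the theorem has been packed into the unproven ``attachment lemma,'' and that lemma as you state it is essentially a restatement of the theorem rather than a reduction of it. Note that by Lemma~\ref{kelly} the $(n-2)$-deck determines $s_F(G)$ for every $F$ with at most $n-2$ vertices, and conversely knowing $s_F$ for all $(n-2)$-vertex $F$ \emph{is} knowing the $(n-2)$-deck. So ``a $3$-regular reattachment to the card $C$ consistent with the globally known counts $s_F$'' is exactly ``a $3$-regular graph with the same $(n-2)$-deck as $G$ having $C$ as a card,'' and uniqueness of such a reattachment up to isomorphism is precisely $2$-reconstructibility within the class of $3$-regular graphs. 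You have therefore reformulated the problem, not decomposed it, and the concluding sentence ``a systematic case analysis \ldots\ should handle the generic situation'' is where the actual theorem lives. This survey does not reproduce the argument, but it does note that the result ``takes considerable effort'': the proof in~\cite{KNWZ2} is a long and delicate analysis that compares information across many cards (organized around girth and the structure of short cycles through the deleted pair), not a reconstruction from a single card plus subgraph counts. There is no evidence in your sketch that the single-card attachment is forced, and the symmetric subcases you flag (where $C$ has automorphisms permuting candidate neighborhoods of $u$ and $v$) are exactly where such an approach is most likely to fail.

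The preliminary parts of your plan are fine and match standard practice: recognizability of $3$-regularity follows from Theorem~\ref{Chernyak} for $n\ge6$ (with $K_4$ trivial), each card's edge count determines whether the deleted pair is adjacent, and the multiset of deficient degrees in a card determines the number of common neighbors of the deleted pair. These observations are correct and are indeed the starting point of the real proof. But between them and the conclusion lies the whole difficulty, and your proposal leaves it as a declared intention rather than an argument.
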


Although this result takes considerable effort, it is (we hope) just the
beginning of study in this area.  It would be interesting both to answer
Mohar's question and to determine the maximum reconstructibility for
$3$-regular graphs or for graphs with maximum degree $3$, extending the
results discussed earlier.

\begin{problem}
For each $r\in\NN$ with $r\ge2$, prove that every $r$-regular graph
is $2$-reconstructible.
\end{problem}

\begin{problem}
Show that for each $\ell\geq 1$ there is a threshold $n_\ell$ such that every
$3$-regular graph with at least $n_\ell$ vertices is
$\ell$-reconstructible.
\end{problem}

Although we do not know whether all $r$-regular graphs are $2$-reconstructible,
for those that are not $2$-connected we can say something much stronger.
Note that we are {\it deleting} $r+1$ vertices; the cards have $n-r-1$ vertices.

\begin{theorem}
Every $r$-regular graph $G$ that is not $2$-connected is $(r+1)$-reconstructible.
\end{theorem}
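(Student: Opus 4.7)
The plan is to split into two cases based on the connectivity of $G$. If $G$ is disconnected, then $r$-regularity forces every component to have at least $r+1$ vertices, and the existence of at least two components forces each component to have at most $n-r-1$ vertices. Thus every connected induced subgraph of $G$ has at most $n-r-1$ vertices, and Theorem~\ref{disconn} with $\ell=r+1$ gives the conclusion.

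So assume instead that $G$ is connected with a cut vertex. The first step is to establish that every end-block $B$ of $G$ with cut vertex $v$ satisfies $r+2\le|V(B)|\le n-r-1$. For the lower bound, $B-v$ is a component of $G-v$, so every vertex of $B-v$ has all $r$ of its $G$-neighbors in $V(B)$, giving $|V(B)|\ge r+1$; the value $r+1$ is impossible because then $B=K_{r+1}$ and $\deg_B(v)=r$, so $v$ would have no neighbor outside $B$, contradicting its status as a cut vertex. For the upper bound, the block tree has at least two leaves, so a second end-block $B'$ with $|V(B')|\ge r+2$ is disjoint from $V(B)-\{v\}$, forcing $|V(B)|\le n-r-1$.

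The key consequence is that for any $2$-connected graph $F$ with exactly one vertex $v^*$ of degree less than $r$ (all others of degree $r$) and $r+2\le|V(F)|\le n-r-1$, every induced copy of $F$ in $G$ is an end-block: by $r$-regularity the non-$v^*$ vertices have all their $G$-neighbors inside the copy of $V(F)$, so $V(F)-\{v^*\}$ is a component of $G-v^*$ and $V(F)$ is a block of $G$ whose unique cut vertex is $v^*$. Applying Kelly's Lemma (Lemma~\ref{kelly}) to such $F$ therefore determines the multiset of end-blocks of $G$ together with the markings of their cut vertices. I would then finish by peeling end-blocks off $G$ in order of increasing size, analogous to the depth induction in the proof of Theorem~\ref{disconn}, using Kelly-style counts of larger induced subgraphs that combine an end-block with part of the ``core'' $G-\bigcup_i(V(B_i)-\{v_i\})$ to recover the attachment of each end-block. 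The main obstacle I expect is this reassembly step: when several end-blocks of the same isomorphism type attach at core vertices sharing the same reduced degree, pinning down the correct attachment seems to require refined counts of augmented subgraphs that combine an end-block with a neighborhood of its cut vertex inside the core.
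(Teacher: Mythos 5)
Your setup matches the paper's: the disconnected case via Theorem~\ref{disconn}, the bounds $r+2\le|V(B)|\le n-r-1$ on end-blocks, and the observation that the $2$-connected induced subgraphs with exactly one vertex of degree less than $r$ (the paper calls these \emph{near $r$-regular}) are precisely the end-blocks, so Kelly's Lemma recovers the multiset of end-blocks with multiplicity. But the step you flag as the ``main obstacle'' is a genuine gap, and it is exactly where your route diverges from, and is harder than, the paper's. You do not need to peel off all end-blocks and re-attach them to a core by refined subgraph counts; that plan runs into precisely the ambiguity you describe (isomorphic end-blocks attaching at indistinguishable core vertices), and you give no mechanism to resolve it. The paper instead places only \emph{one} block. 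Let $B$ be a smallest end-block and $s=|V(B)|\ge r+2$. Deleting the $s-1$ non-cut-vertices of $B$ yields a card $H$ in $\cD_{n-s+1}(G)$, a deck you know by Observation~\ref{k-1} since $s\ge r+2$, and $H$ is itself near $r$-regular: it has a \emph{unique} vertex of degree less than $r$. One identifies $H$ in the deck as a near $r$-regular card whose end-blocks are those of $G$ with one copy of $B$ removed, and recovers $G$ by attaching $B$ at the unique deficient vertex of $H$. The uniqueness of that vertex is what makes the reattachment unambiguous and eliminates any need for induction or refined counts.

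A second omission: you never argue recognizability. For $(r+1)$-reconstructibility you must also rule out reconstructions from the same deck that are not $r$-regular or that are $2$-connected. The paper recovers the degree list from Manvel's Theorem~\ref{tlist} (the cards have at least $r+2$ vertices because $n\ge 2r+3$, which follows from the existence of two end-blocks of size at least $r+2$), and it excludes $2$-connected reconstructions by showing via Menger's Theorem that a $2$-connected $r$-regular graph contains no near $r$-regular subgraph on more than one vertex, whereas the given deck exhibits such subgraphs. Both pieces are needed before your block analysis can be applied to an arbitrary graph sharing the deck.
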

\begin{proof}
If $G$ is disconnected, then every component has at least $r+1$ and hence at
most $n-(r+1)$ vertices.  Thus Theorem~\ref{disconn} applies to make it
$(r+1)$-reconstructible.

Now suppose that $G$ has a cut-vertex.  A subgraph of an $r$-regular graph is
{\em near $r$-regular} if it has exactly one vertex with degree less than $r$.
In every leaf block of $G$, only the cut-vertex of $G$ has degree less than
$r$.  Hence every leaf block is near $r$-regular; furthermore, every near
$r$-regular subgraph of $G$ having no cut-vertex is a leaf block.

Besides the cut-vertex, a leaf block must have at least $r+1$ other vertices;
if only $r$, then $G$ would be $K_{r+1}$.  Since $G$ has at least two leaf
blocks, $G$ has at least $2r+3$ vertices.  Hence the cards in the
$(n-r-1)$-deck have at least $r+2$ vertices, so by
Manvel's result (Theorem~\ref{tlist}) we can reconstruct the degree list.

A $2$-connected $r$-regular graph cannot have a near $r$-regular subgraph $H$
with more than one vertex.  If such $H$ exists, let $x$ be a vertex having
degree $r$ in $H$, and let $y$ be a vertex of $G$ not in $H$.  Since $2$ is
$2$-connected, by Menger's Theorem it has internally disjoint paths from $x$
to $y$.  Such paths must leave $H$ at distinct vertices having degree less than
$r$ in $H$, contradicting that $H$ is near $r$-regular.
Hence we have shown that the class of $r$-regular graphs with connectivity $1$
is $(r+1)$-recognizable.

Since every leaf block omits at least the $r+1$ non-cut-vertices of some other
leaf block, every leaf block has at most $n-(r+1)$ vertices.  
By Observation~\ref{k-1}, we know all the subgraphs of $G$ having at most
$n-(r+1)$ vertices, with their multiplicities.  The near $r$-regular ones 
without cut-vertices are the leaf blocks.  Hence we know all the leaf blocks,
with their multiplicities.

Let $B$ be a leaf block with fewest vertices, and let $s=\C{V(B)}$.
In $\cD_{n-s+1}(G)$ there is a card that has as (leaf) blocks all the leaf
blocks of $G$ other than $B$, and one less leaf block isomorphic to $B$ than
$G$ has.  This card $H$ is near $r$-regular.  Reconstruct $G$ by attaching $B$
at the vertex of $H$ with degree less than $r$.
\end{proof}

\section{Almost All Graphs} \label{sec:almostall}

Using cards not much larger than those that fail to determine connectedness, we
can almost always reconstruct a graph.  Chinn~\cite{Chinn} and
Bollob\'as~\cite{Bollobas} proved that almost all graphs are
$1$-reconstructible.  In fact, this holds also for $\ell$-reconstructibility,
as observed earlier by M\"uller~\cite{Muller}.  The needed tool is that for
almost all graphs, the induced subgraphs with many vertices are pairwise
nonisomorphic and have no nontrivial automorphisms (precise statement below).
We say that a property {\it holds for almost all graphs} if the fraction of
graphs with vertex set $\{1,\ldots,n\}$ for which the property holds tends to
$1$ as $n$ tends to $\infty$.

For $1$-reconstructibility, Chinn proved the following (in a stronger form):

\begin{theorem}[Chinn~\cite{Chinn}]\label{chinn}
If the subgraphs of a graph $G$ obtained by deleting two vertices are pairwise
nonisomorphic, then $G$ is reconstructible.
\end{theorem}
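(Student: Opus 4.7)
The plan is to use the hypothesis to attach a canonical labeling to the vertices of each card in the $(n-1)$-deck and then read off the adjacencies of $G$ directly from this labeling.

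Set $\cD:=\cD_{n-1}(G)$. By Observation~\ref{k-1}, $\cD$ determines $\cD_{n-2}(G)$, which by hypothesis consists of $\binom{n}{2}$ pairwise non-isomorphic types, one per unordered pair of vertices of $G$; this property is visible from $\cD$ alone and so is inherited by any $H$ with $\cD_{n-1}(H)=\cD$. As a preliminary I would show that the cards in $\cD$ are themselves pairwise non-isomorphic: if $G-v_1\cong G-v_2$ for distinct $v_1,v_2$, then their subcard multisets coincide, so after pairing off the shared subcard $G-\{v_1,v_2\}$, each $G-\{v_1,w\}$ with $w\ne v_1,v_2$ would need to be isomorphic to some $G-\{v_2,w'\}$, forcing $\{v_1,w\}=\{v_2,w'\}$, which is impossible. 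Hence $\cD$ gives $n$ distinguishable cards $C_1,\ldots,C_n$, with $C_i$ representing an unknown vertex $v_i\in V(G)$.

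Next, label each vertex $x$ of each $C_i$ by the isomorphism type of $C_i-x$. The $n-1$ labels inside $C_i$ are pairwise distinct, since they are precisely the subcards $G-\{v_i,w\}$ for $w\ne v_i$, and each type $F=G-\{v_i,v_j\}\in\cD_{n-2}(G)$ labels exactly one vertex of $C_i$, exactly one vertex of $C_j$, and no vertex of any other card. Thus for every ordered pair $(i,j)$ with $i\ne j$, this singles out a unique vertex $x_{j\mid i}\in V(C_i)$, which I interpret as the image of $v_j$ inside $C_i=G-v_i$.

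To finish, fix any $i$ and any $j,k\ne i$ and declare $v_jv_k\in E(G)$ iff $x_{j\mid i}$ and $x_{k\mid i}$ are adjacent in $C_i$; this is correct because $v_j,v_k\ne v_i$ means adjacency of their images in $C_i$ equals their adjacency in $G$, and the answer is independent of $i$ and of any arbitrary indexing of $\cD$ because the labels are intrinsic isomorphism invariants of $\cD$. Applied to any $H$ with $\cD_{n-1}(H)=\cD$, the procedure yields the same graph, forcing $H\cong G$. The main conceptual hurdle is that $\cD$ is delivered as an unlabeled multiset: the preliminary non-isomorphism claim for the cards, together with distinctness of subcards, is precisely what lets the abstract subcard dictionary be promoted to a genuine vertex-labeling, after which the adjacencies of $G$ fall out automatically.
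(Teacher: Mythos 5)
Your proposal is correct and follows essentially the same route as the paper: you identify the image of $v_j$ inside the card $G-v_i$ as the unique vertex whose deletion yields the (unique) doubly-deleted subgraph shared with the card $G-v_j$, and then read off the adjacency of $v_j$ and $v_k$ inside a card missing neither. Your version merely systematizes this into a full canonical labeling (and adds the easy preliminary that the $(n-1)$-cards are pairwise nonisomorphic), but the key mechanism --- uniqueness of the $(n-2)$-vertex subgraphs used to locate deleted vertices in other cards --- is exactly the paper's.
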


When the subgraphs satisfy this hypothesis, vertex $u$ is identifiable in $G-w$
because it is the only vertex in $G-w$ whose deletion yields a subgraph
obtainable from $G-u$ by deleting one vertex.  From $G-v$ and $G-w$, one can
similarly identify $v$ in $G-w$.  Now one can check whether $u$ and $v$ are
adjacent in $G$ by checking whether $u$ and $v$ are adjacent in $G-w$.
However, since we used both $G-u$ and $G-v$ to determine whether $u$ and $v$
are adjacent, we used all the cards.

\begin{theorem}[Bollob\'as~\cite{Bollobas}]\label{bollob}
For almost every graph, any three cards determine $G$.
\end{theorem}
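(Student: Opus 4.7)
The plan is to isolate a single random-graph property $\mathcal{P}$ that forces three-card reconstructibility, then show that $\mathcal{P}$ holds for almost every $n$-vertex graph. Building on the hypothesis of Theorem~\ref{chinn}, I propose the property: for $c\in\{1,2,3,4\}$, every two distinct induced subgraphs of $G$ on $n-c$ vertices are non-isomorphic. This $\mathcal{P}$ makes each such subgraph rigid, because a non-trivial automorphism of $G-S$ moving $x$ to $y\ne x$ would yield $G-(S\cup\{x\})\cong G-(S\cup\{y\})$, contradicting $\mathcal{P}$ at the next level down.

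Assume $G$ satisfies $\mathcal{P}$, and let $G-u$, $G-v$, $G-w$ be any three cards (which are distinct, since $G$ is rigid). First, $G-\{u,v\}$ is the unique common $(n-2)$-card of $G-u$ and $G-v$: any $(n-2)$-card of $G-u$ has the form $G-\{u,x\}$, and by $\mathcal{P}$ it is isomorphic to an $(n-2)$-card $G-\{v,y\}$ of $G-v$ only when $\{u,x\}=\{v,y\}$, forcing $\{u,x\}=\{u,v\}$. Hence the vertex of $G-u$ whose deletion yields this common isomorphism class is, uniquely, the copy of $v$. Applied to each of the three pairs of cards, this marks $u$, $v$, $w$ inside each card. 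Next, $G-\{u,v,w\}$ appears inside $G-u$ as $(G-u)-\{v,w\}$ and inside $G-v$ as $(G-v)-\{u,w\}$; since $G-\{u,v,w\}$ is rigid, the isomorphism between these two realizations is unique, producing a canonical bijection between the generic vertices of $G-u$ and $G-v$. The analogous step against $G-w$ extends this to a consistent labeling of every vertex of $G$ across all three cards. Since every pair of vertices of $G$ is contained in some card (the pair $\{u,v\}$ in $G-w$, a pair $\{u,x\}$ with generic $x$ in $G-v$, and a pair of generic vertices in all three cards), every edge of $G$ is visible in some card, and $G$ is reconstructed.

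The main obstacle is proving that $\mathcal{P}$ holds almost surely for a uniformly random $n$-vertex graph. The plan is a union bound. Fix $S_1\ne S_2\subseteq V(G)$ with $|S_1|=|S_2|=c\le 4$ and a bijection $\pi\colon V\setminus S_1\to V\setminus S_2$, extended arbitrarily to a permutation $\tilde\pi$ of $V(G)$. For $\pi$ to be an isomorphism $G-S_1\to G-S_2$, every unordered pair $\{x,y\}\subseteq V\setminus S_1$ must satisfy $xy\in E(G)\iff \tilde\pi(x)\tilde\pi(y)\in E(G)$; each pair not stabilized (as an unordered pair) by $\tilde\pi$ contributes an independent factor of $1/2$. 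If $\tilde\pi$ has $k\ge 1$ non-fixed points, then at least $\Omega(kn)$ pairs are non-stabilized (each non-fixed point pairs with nearly all fixed points to give a non-stabilized pair), so the probability of isomorphism via $\pi$ is at most $2^{-\Omega(kn)}$. The number of such permutations is at most $n^{k}$, so summing $n^{O(1)}\cdot n^{k}\cdot 2^{-\Omega(kn)}$ over $k\ge 1$ (with an $O(n^{8})$ factor for the choice of $(S_1,S_2)$) gives a bound tending to $0$. The technical delicacy is the lower bound on non-stabilized pairs when $k$ is small; this is the one step requiring genuine care.
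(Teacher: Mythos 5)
Your overall strategy is the same as the one the paper sketches (and as Bollob\'as's original argument): isolate a deterministic distinctness condition on the large induced subgraphs, show it forces reconstruction from three cards, and show it holds almost surely. Your deterministic half is correct and in fact slightly cleaner than the version described in the paper: the paper follows \cite{Chinn,Bollobas} in rebuilding $G$ from $G-u$ and $G-v$ up to the single bit ``is $uv$ an edge,'' consulting the third card only for that bit via uniqueness in $\cD_{n-3}(G)$, whereas you use all three cards symmetrically and get a canonical global labeling from the rigidity of $G-\{u,v,w\}$. Your derivation of rigidity of $G-S$ from pairwise non-isomorphism one level down is also right, and because every step of your procedure is forced (unique common $(n-2)$-card, unique vertex realizing it, unique isomorphism between rigid subgraphs), any graph sharing the three cards is driven to the same reconstruction, which is what ``determine'' requires. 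Where you diverge from the paper is the probabilistic half: the paper simply invokes M\"uller's lemma \cite{Muller} (all induced subgraphs on at least $(1+\epsilon)n/2$ vertices are rigid and pairwise nonisomorphic a.a.s.), which gives your property $\mathcal{P}$ with enormous room to spare, while you prove the needed special case from scratch.

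The one genuine gap is exactly the step you flagged. Your lower bound of $\Omega(kn)$ non-stabilized pairs is justified by ``each non-fixed point pairs with nearly all fixed points,'' and this fails when $\tilde\pi$ has few fixed points (e.g., $\tilde\pi$ close to an $n$-cycle, $k$ close to $n$): there are then almost no fixed points to pair with, and the bound degenerates to nothing precisely in the range where the union bound needs $2^{-\Omega(kn)}$ most. The standard repair (due to Erd\H{o}s and R\'enyi in the asymmetry argument) is to count against \emph{all} other points, not just fixed ones: for a non-fixed $x$, the pair $\{x,y\}$ is stabilized only if either both $x,y$ are fixed or $\tilde\pi$ swaps $x$ and $y$, so each non-fixed point in the domain lies in at least $n-|S_1|-2$ moved pairs, giving at least $\max\{1,k-4\}(n-6)/2$ moved pairs for every $k\ge1$. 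You then still need to extract independent coin flips from these correlated constraints: the map $\{x,y\}\mapsto\{\tilde\pi(x),\tilde\pi(y)\}$ has in- and out-degree at most $1$ on pairs, so the constraint graph is a union of paths and cycles and its rank is at least half the number of moved pairs, which is enough. With these two adjustments your union bound closes, and the proof is complete.
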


Under the same hypothesis as in Theorem~\ref{chinn}, Bollob\'as gave a more
careful argument to reconstruct all of $G$ from $G-u$ and $G-v$ except for
determining whether $u$ and $v$ are adjacent.  For that he consulted a third
card, invoking the uniqueness of the graphs in $\cD_{n-3}(G)$ to identify $u$
and $v$ in $G-w$.  However, it seems that uniqueness in $\cD_{n-2}(G)$ suffices
to identify $u$ and $v$ in $G-w$ as discussed above.

Theorem~\ref{bollob} is stronger than saying that {\it some} three cards
determine $G$, which is the meaning of reconstruction number $3$ (two cards can
never determine whether the two deleted vertices are adjacent).  The needed
tool is the next lemma.

\begin{lemma}[M\"uller~\cite{Muller}]\label{lem: almost all}\label{nonisom}
Let $\epsilon$ be a small positive real number.  For almost every graph $G$,
the induced subgraphs with at least $k$ vertices have no nontrival
automorphisms and are pairwise nonisomorphic, where
$k=(1+\epsilon)\frac{|V(G)|}{2}$
\end{lemma}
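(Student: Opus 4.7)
The plan is to work in the random-graph model $G(n, 1/2)$, which yields the uniform distribution on labeled graphs on $[n]$; a property holds for almost all graphs if and only if it holds in $G(n, 1/2)$ with probability $1 - o(1)$. Set $k = \CL{(1+\epsilon) n/2}$. Both the asymmetry of every induced subgraph on $\ge k$ vertices and the pairwise non-isomorphism of any two such subgraphs reduce to a single union bound: we must show that with high probability there is no triple $(A, B, \phi)$ with $A, B \esub [n]$, $|A| = |B| = m \ge k$, and $\phi\colon A \to B$ a bijection (not the identity when $A = B$) that is an isomorphism $G[A] \to G[B]$.

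For a fixed $\phi$, the induced map $\tilde\phi\colon \binom{A}{2} \to \binom{B}{2}$ sending $\{u,v\}$ to $\{\phi(u), \phi(v)\}$ generates an equivalence relation $\sim$ on $\binom{A}{2} \cup \binom{B}{2}$. The event that $\phi$ is an isomorphism says that the edge indicator of $G$ is constant on every $\sim$-class, so it has probability $2^{-(N-c)}$, where $N = \C{\binom{A}{2} \cup \binom{B}{2}}$ and $c$ is the number of $\sim$-classes. The proof's crux is a good lower bound on $N - c$ measuring how far $\phi$ is from the identity.

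An illustrative and essentially decisive special case is a transposition: take $A = B$ and $\phi = (u\,v)$. Then $\phi$ is an automorphism of $G[A]$ if and only if $N(u) \triangle N(v)$, computed in $[n] \setminus \{u,v\}$, avoids $A \setminus \{u,v\}$; since $|A| \ge k$, this forces $|N(u) \triangle N(v)| \le n - k = (1-\epsilon)n/2$. But $|N(u) \triangle N(v)|$ is $\mathrm{Bin}(n-2, 1/2)$ and concentrated near $(n-2)/2$, so Chernoff gives probability at most $2^{-\delta n}$ for a constant $\delta = \delta(\epsilon) > 0$. A union bound over the $\binom{n}{2}$ transpositions $\{u,v\}$ (which simultaneously controls every admissible $A$) then contributes $o(1)$.

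The \emph{main obstacle} is handling general $\phi$ without the naive count $\binom{n}{m}^2 m!$ of triples swamping the per-$\phi$ decay. The remedy is to re-parametrize each $\phi$ by its ``support'' $D \esub [n]$, consisting of the points not fixed by $\phi$ together with $A \triangle B$, and by its restriction $\psi = \phi|_D$: the requirement that $\phi$ extends to some valid $(A, B)$ of size $\ge k$ becomes a condition on $G$ that depends only on $(D, \psi)$. Concretely, for each non-fixed $v \in D$, the symmetric difference $N(v) \triangle N(\psi(v))$ must avoid a $(k - |D|)$-subset of $[n] \setminus D$, forcing $\C{\bigcup_{v \in D} (N(v) \triangle N(\psi(v))) \cap ([n] \setminus D)} \le n - k$. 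Chernoff bounds applied to the at least $|D|/2$ independent symmetric differences on the $n - |D|$ remaining vertices give probability at most $2^{-\Omega(|D|(n - |D|))}$, which beats the count $\binom{n}{|D|} |D|!$ of parametrizations for every $|D| \ge 2$. Summing over $|D|$ and combining with the transposition case yields the desired $o(1)$ total, completing the proof.
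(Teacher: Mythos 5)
The paper states this lemma as M\"uller's result and does not reproduce a proof, so I am judging your argument on its own merits. Your setup (work in $G(n,1/2)$, union-bound over triples $(A,B,\phi)$, and compute the probability that a fixed $\phi$ is an isomorphism as $2^{-(N-c)}$ via the components of the induced map on pairs) is the right framework, and your transposition analysis is correct. But the ``remedy'' in your final paragraph has a genuine gap: every constraint you extract comes from the \emph{fixed points} of $\phi$ (for each moved $v$, the set $N(v)\triangle N(\phi(v))$ must avoid the common fixed-point set, which lies in $[n]\setminus D$ and has size at least $k-|D|$). These constraints are vacuous once $|D|\ge k$, and already too weak once $|D|$ exceeds roughly $\epsilon n$: the quantity $|(N(v)\triangle N(\phi(v)))\cap([n]\setminus D)|$ is $\mathrm{Bin}(n-|D|,1/2)$ with mean $(n-|D|)/2$, and the required event ``$\le n-k=(1-\epsilon)n/2$'' is no longer a large deviation when $n-|D|\le(1-\epsilon)n$. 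Concretely, for $A=B=[n]$ and $\phi$ a fixed-point-free involution you get $[n]\setminus D=\varnothing$, your claimed bound $2^{-\Omega(|D|(n-|D|))}$ degenerates to $1$, and the union bound over the roughly $2^{\Theta(n\log n)}$ such involutions fails outright. So the assertion that the per-$(D,\psi)$ bound ``beats the count for every $|D|\ge 2$'' is false.

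What is missing is the complementary half of the standard asymmetry argument, which you allude to in your second paragraph but never execute: when $\phi$ moves many points, you must lower-bound $N-c$ by counting the pairs moved by the induced map $\tilde\phi$ on $\binom{A}{2}\cup\binom{B}{2}$. If $\phi$ moves $d$ points of $A$, then at least $d(|A|-d)+\binom{d}{2}-d/2$ pairs are moved, each nontrivial component contributes at least half of its pairs to $N-c$, and so $N-c=\Omega(dk)$; for $d\ge\epsilon n/2$ this gives $N-c=\Omega(\epsilon^2n^2)$, which crushes the at most $4^n\cdot n!=2^{O(n\log n)}$ choices of $(A,B,\phi)$. The correct proof therefore splits on the size of the support: your fixed-point/Chernoff argument for small $d$ (where the reparametrization by $(D,\psi)$ is exactly the right idea, since one must not enumerate the extensions $(A,B)$) and the orbit count on pairs for large $d$. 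A smaller issue: the claim of ``at least $|D|/2$ independent symmetric differences'' needs justification when $\psi$ has long cycles, since $N(v)\triangle N(\psi(v))$ and $N(\psi(v))\triangle N(\psi^2(v))$ share edges at $\psi(v)$; extracting about $|D|/3$ pairwise disjoint pairs $(v,\psi(v))$ from the cycle decomposition suffices and should be said explicitly.
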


Via counting arguments, M\"uller showed that graphs with this property are
reconstructible from smaller cards.  Spinoza and West more directly generalized
the combinatorial argument of Bollob\'as, thereby reconstructing the graph from
a small set of cards in the $(k+1)$-deck.  However, one step in their
construction does not work when $\ell=1$.

\begin{theorem}[Spinoza--West~\cite{SW}]\label{lrecon}
For $\ell>1$, if the subgraphs of $G$ obtained by deleting $\ell+1$ vertices
have no nontrival automorphisms and are pairwise nonisomorphic, then $G$ is
$\ell$-reconstructible, using just $\binom{\ell+2}{2}$ cards from the
$(|V(G)|-\ell)$-deck.
\end{theorem}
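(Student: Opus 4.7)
The plan is to generalize Bollob\'as's argument~\cite{Bollobas} by aligning multiple cards using the rigidity and uniqueness of $(k-1)$-vertex sub-cards, where $k=|V(G)|-\ell$.

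I first show that the hypothesis propagates upward to $k$-vertex induced subgraphs: any automorphism of a $k$-vertex induced subgraph $H$ sending $v$ to $\sigma(v)\ne v$ produces isomorphic $(k-1)$-vertex induced subgraphs $H-v$ and $H-\sigma(v)$ on different vertex sets, contradicting the hypothesis; and a similar comparison of $(k-1)$-sub-cards forces any two isomorphic $k$-vertex induced subgraphs to have the same vertex set. Thus every $k$-vertex induced subgraph of $G$ is rigid, and those with different vertex sets are non-isomorphic.

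Next I select the $\binom{\ell+2}{2}$ cards: fix $U\esub V(G)$ with $|U|=\ell+2$, and take the $\binom{\ell+2}{\ell}=\binom{\ell+2}{2}$ cards $C_S=G-S$, one per $\ell$-subset $S$ of $U$. Call a $(k-1)$-vertex sub-card \emph{special} if it occurs in at least two chosen cards. Since $G-T$ is a sub-card of $C_S$ iff $S\esub T$, a direct count shows the special sub-cards are exactly the $\ell+2$ graphs $G-T$ for $T\esub U$ with $|T|=\ell+1$, each occurring in $\ell+1$ chosen cards; any other $(k-1)$-sub-card of a chosen card has one vertex outside $U$ and lies in a single chosen card. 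Each card $C_S$ thus has $|U\setminus S|=2$ special sub-cards, one per element of $U\setminus S$, and by rigidity of $G-T$ there is a unique vertex $x_T\in V(C_S)$ with $C_S-x_T\cong G-T$, which represents the element of $T\setminus S$.

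For two chosen cards $C_S,C_{S'}$ with $|S\cap S'|=\ell-1$, the shared special sub-card $G-(S\cup S')$ has unique rigid embeddings inside both, giving a canonical bijection on the common $k-1$ vertices. The graph on the chosen cards with such pairs as edges (the Johnson graph $J(\ell+2,\ell)$) is connected, so chaining these bijections yields a consistent identification of the $n-\ell-2$ vertices of $V(G)\setminus U$ and of the $\ell+2$ distinguished vertices of $U$ across all chosen cards. To recover any potential adjacency $xy$ of $G$, pick a chosen card $C_S$ containing both: when $\{x,y\}\esub U$ the unique valid choice is $C_{U\setminus\{x,y\}}$, otherwise many cards work. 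The induced adjacency in that card is the adjacency in $G$.

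The main obstacle is verifying internal consistency of the alignment across the Johnson graph---a priori, different chains could produce conflicting vertex identifications. This is resolved because pairwise non-isomorphism of $(k-1)$-vertex subgraphs means each $(k-1)$-sub-card of any chosen card corresponds to a unique $(k-1)$-subset of $V(G)$, so every rigid embedding must agree with the underlying subset identification in $V(G)$, making all chains compatible.
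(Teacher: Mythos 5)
The survey does not actually reproduce a proof of this theorem; it only records the card selection from~\cite{SW}: all cards deleting $\ell$ vertices of a fixed $(\ell+1)$-set $S$, plus, for each pair $u,v\in S$, one card deleting $S\setminus\{u,v\}$ together with one vertex outside $S$. Your selection (all $\ell$-subsets of a fixed $(\ell+2)$-set $U$) is exactly the special case in which all of those outside vertices coincide, so in spirit you are following the same route. Your bookkeeping is sound as far as it goes: the propagation of rigidity and pairwise non-isomorphism from $(k-1)$-vertex to $k$-vertex induced subgraphs, the identification of the $\ell+2$ ``special'' sub-card types with the $(\ell+1)$-subsets of $U$, and the alignment of overlapping cards along the connected Johnson graph are all correct computations \emph{inside $G$}.

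That last phrase is the gap. Reconstructibility requires showing that no \emph{other} graph $H$ has these $\binom{\ell+2}{2}$ isomorphism types among its cards, and your consistency argument appeals explicitly to ``the underlying subset identification in $V(G)$,'' which is unavailable for an unknown host $H$. Concretely: if $H-A_i\cong G-S_i$ and $H-A_j\cong G-S_j$, the two sub-cards of $H$ realizing the shared special type $[\,G-(S_i\cup S_j)\,]$ are isomorphic $(k-1)$-vertex induced subgraphs of $H$, but nothing you have proved forces them to be the \emph{same} induced subgraph of $H$ (that is, $A_i\cup\{x\}=A_j\cup\{y\}$); without that, your gluing reconstructs $G$ but does not show $H\cong G$. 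For the weak form of the statement ($\cD_{n-\ell}(H)=\cD_{n-\ell}(G)\Rightarrow H\cong G$) this is repairable: by Observation~\ref{k-1} the $(k-1)$-decks agree, all $\binom{n}{\ell+1}$ types in $\cD_{k-1}(G)$ are distinct, so each special type occurs in $H$ with multiplicity one and the two realizations must coincide. But for the actual claim---that the chosen $\binom{\ell+2}{2}$ cards alone determine $G$---you only know that the $(k-1)$-types are distinct \emph{within} each chosen card, and ruling out a stray second copy of a special type elsewhere in $H$ is precisely where the real work lies. It is also where the hypothesis $\ell>1$, which your argument never invokes even though the paper notes the construction fails for $\ell=1$, must enter.
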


This not only shows $\ell$-reconstructibility; it also places a bound on the
natural generalization of reconstruction number to the $(n-\ell)$-deck.
Furthermore, asymptotically at least $\CH{n}{\ell+1}(n-\ell-1)^{\CH{\ell+1}2}$
sets of $\CH{\ell+2}2$ cards from $\cD_{n-\ell}(G)$ determine $G$.
The cards are chosen by specifying a fixed set $S$ of $\ell+1$ vertices in $G$
and taking all cards that delete $\ell$ of them, plus for each pair $u,v\in S$
one card obtained by deleting $S-\{u,v\}$ and one vertex outside $S$.

\section{Another Model of Reconstruction}\label{sec:relation}

As mentioned in the introduction, the term ``$k$-reconstructible'' is also used
in another model of reconstruction with different definitions.  Here we explain
the difference in order to reduce confusion.

We use ``digraph'' to mean a general binary relation (no repeated edges).
Two digraphs $D$ and $D'$ on an $n$-element vertex set $V$ are 
{\it $k$-isomorphic} if for every $k$-element subset $X\esub V$, the
subdigraphs of $D$ and $D'$ induced by $X$ are isomorphic.  They are
{\it $(\le k)$-isomorphic} if they are $k'$-isomorphic for all $k'$ with
$1\le k'\le k$.  They are {\it $(-k)$-isomorphic} if they are
$(n-k)$-isomorphic.  A digraph $D$ is {\it $\alpha$-reconstructible}, where
$\alpha\in\{k,\le k,-k\}$, if every digraph $\alpha$-isomorphic to $D$ is
isomorphic to $D$.

These notions were introduced by Fra\"iss\'e~\cite{Fr}, who conjectured
that for sufficiently large $k$ every digraph is $(\le k)$-reconstructible
(and analogously for $m$-ary relations, for each $m$).
The difference between Fra\"iss\'e's model and that of Kelly and Ulam is that
in Fra\"iss\'e's problem we are told the identities of the missing vertices,
but in the problem of Kelly and Ulam we are given only the multiset of
isomorphism types.  The notions coincide for the original conjecture: a graph
(that is, a symmetric digraph) is reconstructible (in the Kelly--Ulam sense) if
and only if it is $(-1)$-reconstructible (in the Fra\"iss\'e sense).
Stockmeyer~\cite{St} showed that general digraphs (in fact, orientations of
complete graphs) are not $(-1)$-reconstructible.

The difference is clear when $k=2$.  Only graphs with at most one edge (and
their complements) are reconstructible from their $2$-decks, but every
symmetric digraph is $2$-reconstructible, since we are told which pairs are
adjacent.  This does not hold for general digraphs; any two orientations of a
complete graph are $2$-isomorphic.

Fra\"iss\'e's conjecture was proved for digraphs (that is, binary relations) by
Lopez~\cite{Lo1,Lo2}, who proved that every digraph is $(\le6)$-reconstructible
(this is sharp).  The theorem was proved independently by Reid and
Thomassen~\cite{RT}, and it also follows from the later characterization of the
non-$(\le k)$-reconstructible digraphs by Boudabbous and Lopez~\cite{BL}.
A history of the topic appears in \cite{BHZ}.

Analogously to Observation~\ref{k-1}, Pouzet showed that if two $n$-vertex
digraphs are $p$-isomorphic, then they are also $q$-isomorphic whenever
$1\le q\le \min\{p,n-p\}$.  With Lopez's Theorem, this implies that every
digraph with at least $11$ vertices is $6$-reconstructible, and every
digraph with at least $12$ vertices is $(-6)$-reconstructible.


\begin{thebibliography}{99}
\frenchspacing

{\small

\bibitem{AFLM}
K.~J. Asciak, M.~A. Francalanza, J. Lauri, and W. Myrvold,
A survey of some open questions in reconstruction numbers, 
{\it Ars Combin.} 97 (2010), 443--456. 

\bibitem{Bollobas}
B. Bollob\'as,
Almost every graph has reconstruction number three,
{\it J. Graph Theory} 14 (1990), 1--4.

\bibitem{Bondy91}
J. A. Bondy,
A graph reconstructor's manual, in {\it Surveys in Combinatorics (Guildford,
1991)}, Lond. Math. Soc. Lec. Notes
166 (Cambridge U. Press, 1991), 221--252.

\bibitem{BH77}
J. A. Bondy and R. L. Hemminger,
Graph reconstruction---a survey,
{\it J. Graph Theory} 1 (1977), 227--268.

\bibitem{BH78}
J. A. Bondy and R. L. Hemminger, Reconstruction by enumeration---some
applications of a counting theorem to the graph reconstruction problem.
Probl\`emes combinatoires et th\'eorie des graphes (Colloq. Internat. CNRS,
Univ.  Orsay, Orsay, 1976), Colloq. Internat. CNRS, 260, CNRS,
Paris (1978), 51--55.

\bibitem{BHZ}
Y. Boudabbous, L. Haddad, and N. Zaguia, 
The mathematics of G\'{e}rard Lopez and Maurice Pouzet,
{\it J. Mult.-Valued Logic \& Soft Computing} 27 (2016), 117--132.

\bibitem{BL}
Y. Boudabbous and G. Lopez,
The minimal non-($\leq k$)-reconstructible relations,
{\it Discrete Math.} 291 (2005), 19--40.

\bibitem{BBF}
A. Bowler, P. Brown, and T. Fenner,
Families of pairs of graphs with a large number of common cards.
{\it J. Graph Theory} 63 (2010), 146--163.

\bibitem{BBFM}
A. Bowler, P. Brown, T. Fenner, and W. Myrvold,
Recognizing connectedness from vertex-deleted subgraphs,
{\it J. Graph Theory} 67 (2011), 285--299.

\bibitem{BF}
P. Brown and T. Fenner, 
The size of a graph is reconstructible from any $n--2$ cards, 
{\it Discrete Math.} 341 (2018), 165--174. 

\bibitem{Che}
Zh. A. Chernyak,
Some additions to an article by B. Manvel: "Some basic observations on Kelly's
conjecture for graphs'' (Russian),
{\it Vests$\overline{\i}$ Akad. Navuk BSSR Ser. Fīz.-Mat. Navuk}
(1982), 44--49, 126.

\bibitem{Chinn}
P. Chinn,
A graph with $p$ points and enough distinct $(p-2)$-order subgraphs is
reconstructible,
{\it Recent Trends in Graph Theory}
Lecture Notes in Mathematics 186 (Springer, 1971).

\bibitem{Fr}
R. Fra\"{i}ss\'{e},  L'intervalle en th\'{e}orie des relations, ses
g\'{e}n\'{e}ralisations, filtre intervallaire et cl\^oture d'une relation,
in: M. Pouzet and D. Richard eds., Order, Description and Roles,
(North-Holland, Amsterdam, 1984), 313--342.

\bibitem{Giles}
W.~B.~Giles, Reconstructing trees from two-point deleted subtrees,
{\it Discrete Math.} 15 (1976), 325--332.

\bibitem{GH}
D. L. Greenwell and R. L. Hemminger, Reconstructing the $n$-connected
components of a graph. Aequationes Math. 9 (1973), 19--22.

\bibitem{GGS}
C. Groenland, H. Guggiari, and A. Scott,
Size reconstructibility of graphs, arXiv:1807.11733v1.

\bibitem{HP}
F.~Harary and M.~Plantholt,
The graph reconstruction number,
{\it J. Graph Theory} 9 (1985), 451--454.

\bibitem{kel1}
P.~J.~Kelly,
On isometric transformations,
PhD Thesis, University of Wisconsin-Madison, 1942.

\bibitem{kel2}
P.~J.~Kelly,
A congruence theorem for trees,
{\it Pacific J. Math.} 7 (1957), 961--968.

\bibitem{KK}
W.~L.~Kocay and D.~L.~Kreher,
On reconstructing graphs and their complements, 
{\it SIAM J. Discrete Math.} 28 (2014), 1026--1034. 

\bibitem{KNWZ}
A.~V.~Kostochka, M. Nahvi, D.~B.~West, and D.~Zirlin,
Degree lists and connectedness are $3$-reconstructible for graphs
with at least seven vertices, submitted.

\bibitem{KNWZ2}
A.~V.~Kostochka, M. Nahvi, D.~B.~West, and D.~Zirlin,
$3$-Regular graphs are $2$-reconstructible, submitted.

\bibitem{Lauri97}
J. Lauri,
Pseudosimilarity in graphs---a survey,
{\it Ars Combin.} 46 (1997), 77--95.

\bibitem{LS}
J. Lauri and R. Scapellato,
{\it Topics in Graph Automorphism and Reconstruction},
{\it London Math. Soc. Student Texts 54}, (Cambridge Univ.
Press, Cambridge, 2003).  Second edition: {\it London Math. Soc.
Lect. Note Series 432}, (Cambridge Univ. Press, Cambridge, 2016).

\bibitem{Lev}
V.I. Levenshtein, New problems of graph reconstruction,
{\it Bayreut. Math. Schr.} 71 (2005), 257--274.

\bibitem{Le} V. Levenshtein, 
A conjecture on the reconstruction of graphs from metric balls of their
vertices, {\it Discrete  Math.} 308 (2008), 993--998.

\bibitem{LKKM} V. Levenshtein, E. Konstantinova, E. Konstantinov,
and S. Molodtsov, Reconstruction of a graph from 2-vicinities of its vertices,
{\it Discrete Appl. Math.} 156 (2008), 1399--1406.

\bibitem{Lo1}
G. Lopez,
Deux r\'esultats concernant la d\'etermination d'une relation par les
types d'isomorphie de ses restrictions,
{\it C. R. Acad. Sci. Paris S\'er.} A-B 274 (1972), 1525--1528.

\bibitem{Lo2}
G. Lopez,
L'ind\'eformabilit\'e des relations et multirelations binaires,
{\it Z. Math. Logik Grundlag. Math.} 24 (1978), 303--317.

\bibitem{Maccari02}
A. Maccari, O. Rueda, V. Viazzi,
A survey on edge reconstruction of graphs,
{\it J. Discr. Math. Sci. Cryptog.}  5  (2002),  1--11.

\bibitem{Manvel}
B. Manvel, Some basic observations on Kelly's conjecture for graphs,
{\it Discrete Math.} 8 (1974), 181--185. 

\bibitem{MR}
B. McMullen and S. Radziszowski,
Graph reconstruction numbers,
{\it J. Combin. Math. Combin. Comput.} 62 (2007), 85--96.

\bibitem{MB}
S. Monikandan and J. Balakumar, 
On pairs of graphs having $n-2$ cards in common, 
{\it Ars Combin.} 124 (2016), 289--302.

\bibitem{Muller}
V. M\"uller, 
Probabilistic reconstruction from subgraphs,
{\it Comment. Math. Univ. Carolinae} 17 (1976), 709--719.

\bibitem{Mthe}
W.~J. Myrvold,
Ally and adversary reconstruction problems,
Ph.D. thesis, Univ. Waterloo (1988).

\bibitem{Myrvold89}
W.~J. Myrvold,
The ally-reconstruction number of a disconnected graph,
{\it Ars Combin.} 28 (1989), 123--127.

\bibitem{Myrvold90}
W.~J. Myrvold,
The ally-reconstruction number of a tree with five or more vertices is three,
{\it J. Graph Theory} 14 (1990), 149--166.

\bibitem{Myrvold92}
W.~J. Myrvold, The degree sequence is reconstructible from $n-1$ cards,
{\it Discrete Math.} 102 (1992), 187--196.

\bibitem{N81}
V. N\'ydl,
Finite graphs and digraphs which are not reconstructible from their large
cardinality restricted subraphs,
{\it Comment. Math. Univ. Carolin.} 22 (1981), 281--287.

\bibitem{N84}
V. N\'ydl, 
Some results concerning reconstruction conjecture,
{\it Rend. Circ. Mat. Palermo, II} Suppl. 6 (1984), 243--246.

\bibitem{N85}
V. N\'ydl,
Reconstructing equivalences,
{\it Rend. Circ. Mat. Palermo, II} Suppl. 11 (1985), 71--75.

\bibitem{N90}
V. N\'ydl,
A note on reconstructing finite tree from small subtrees,
{\it Acta Univ. Carol. Math. Phys.} 31 (1990), 71--74.

\bibitem{N92}
V. N\'ydl,
Finite undirected graphs which are not reconstructible from their
large cardinality subgraphs,
{\it Discrete Math.} 108 (1992), 373--377.

\bibitem{N01}
V. N\'ydl,
Graph reconstruction from subgraphs,
{\it Discrete Math.} 235 (2001), 335--341.

\bibitem{RM}
S. Ramachandran and S. Monikandan, 
Pairs of graphs having $n-2$ cards in common, 
{\it Ars Combin.} 112 (2013), 213--223. 

\bibitem{RT}
K. B. Reid and C. Thomassen,
Strongly self-complementary and hereditarily isomorphic tournaments,
{\it Monatshefte Math.} 81 (1976), 291--304.

\bibitem{RR}
D. Rivshin and S. Radziszowski,
Multi-vertex deletion graph reconstruction numbers.
{\it J. Combin. Math. Combin. Comput.} 78 (2011), 303--321.


\bibitem{SW}
H. Spinoza and D.~B.~West,
Reconstruction from the deck of $k$-vertex induced subgraphs,
{\it J. Graph Theory} 90 (2019), 497--522.

\bibitem{Stanley}
R.~P.~Stanley, in Problems and Solutions,
{\it American Math. Monthly}, {123}(3) (2016), 296--303.

\bibitem{St}
P. K. Stockmeyer,
The falsity of the reconstruction conjecture for tournaments,
{\it J. Graph Theory} 1 (1977), 19--25;
erratum {\it J. Graph Theory} 62 (2009), 199--200.

\bibitem{Taylor}
R. Taylor,
Reconstructing degree sequences from $k$-vertex-deleted subgraphs.
{\it Discrete Math.} 79 (1990), 207--213.

\bibitem{U}
S.~M.~Ulam, A collection of mathematical problems,
{\it Interscience Tracts in Pure and Applied Mathematics} 8
(Interscience Publishers, 1960).

\bibitem{Woodall}
D.~R.~Woodall, Towards size reconstruction from fewer cards.
{\it Discrete Math.} 338 (2015), 2514--2522.



}
\end{thebibliography}
\end{document}